\definecolor{darkblue}{RGB}{0,0,160}
\newcommand{\nolisttopbreak}{\vspace{\topsep}\nobreak\@afterheading}
\theoremstyle{definition}
\newtheorem{definition}{Definition}[section]
\newtheorem{theorem}[definition]{Theorem}
\newtheorem{proposition}[definition]{Proposition}
\newtheorem{lemma}[definition]{Lemma}
\newtheorem{corollary}[definition]{Corollary}
\newtheorem*{fact*}{Fact}
\newtheorem{remark}[definition]{Remark}
\newtheorem{example}[definition]{Example}
\newtheorem{question}[definition]{Question}
\newtheorem{problem}[definition]{Problem}
\newtheorem{conjecture}[definition]{Conjecture}
\newcommand{\C}{\mathcal{C}}
\newcommand{\Cf}{\mathfrak{C}}
\newcommand{\Hc}{\mathcal{H}}
\newcommand{\M}{\mathcal{M}}
\newcommand{\NN}{\mathbb{N}}
\newcommand{\RR}{\mathbb{R}}
\newcommand{\ZZ}{\mathbb{Z}}
\newcommand{\QQ}{\mathbb{Q}}
\newcommand\nub{{\boldsymbol 0}}
\newcommand\ab{{\boldsymbol a}}
\newcommand\bb{{\boldsymbol b}}
\newcommand\eb{{\boldsymbol e}}
\newcommand\rb{{\boldsymbol r}}
\newcommand\ub{{\boldsymbol u}}
\newcommand\vb{{\boldsymbol v}}
\newcommand\wb{{\boldsymbol w}}
\DeclareMathOperator{\cncl}{cn}
\DeclareMathOperator{\mndcl}{mn}
\DeclareMathOperator{\Sym}{Sym}
\DeclareMathOperator{\supp}{supp}
\DeclareMathOperator{\width}{width}
\DeclareMathOperator{\Span}{span}
\DeclareMathOperator{\ind}{ind}
\newcommand\defas{\coloneqq}
\title[Theorems of Carath\'{e}odory, Minkowski-Weyl, and Gordan
up to symmetry]{
Theorems of Carath\'{e}odory, Minkowski-Weyl, and Gordan\\
up to symmetry
}
\author{Dinh Van Le}
\address{Universit\"at Osnabr\"uck\\ Osnabr\"uck, Germany}
\email{dlevan@uos.de}
\author{Tim R\"omer}
\address{Universit\"at Osnabr\"uck\\ Osnabr\"uck, Germany}
\email{troemer@uos.de}
 \subjclass[2020]{Primary: 05E18, 52B99; Secondary: 20M30, 90C05}
 \keywords{cone, monoid, equivariant, symmetric group}
\begin{document}
\begin{abstract}
In this paper we extend three classical and fundamental results in polyhedral geometry, namely, Carath\'{e}odory's theorem, the  Minkowski-Weyl theorem, and Gordan's lemma to infinite dimensional spaces, in which considered cones and monoids are invariant under actions of symmetric groups.
\end{abstract}
\maketitle

%--------------------------------------------------------------------------------------

\section{Introduction}
\label{sec:introduction}

This paper is motivated by the desire to extend classical and fundamental results in polyhedral geometry to the \emph{infinite dimensional} case. Of course, without further constraints, one cannot hope for any feasible and meaningful extensions. Therefore, one has first to clarify what  \emph{reasonable} constraints could be.
A possible answer to this question is given
in our joint work  with Kahle \cite{KLR}, where a general framework is developed to study sets  in infinite dimensional ambient spaces
that are invariant under the action of the infinite symmetric group $\Sym$ (see Section~\ref{section-cones} for precise definitions). The main results of \cite{KLR} allow to characterize when objects of interest in polyhedral geometry, such as cones, monoids, and polytopes, are equivariantly finitely generated. It is thus a natural goal to extend polyhedral geometry to this equivariant setting, for which the following problem is of fundamental importance:

\begin{problem}
 \label{pb:main}
Extend classical results in polyhedral geometry to infinite dimensional spaces, in which considered objects are invariant under actions of symmetric groups.
\end{problem}

It should be noted that this problem and also the work \cite{KLR} are inspired by the rapidly growing theory of symmetric ideals in infinite dimensional polynomial rings. Such ideals appeared naturally in various areas of mathematics and have recently been intensively studied. See \cite{Dr14} for an excellent survey. One prominent theme in this theory is to find equivariant versions of known results related to ideals in Noetherian polynomial rings. Examples of successful extensions include equivariant Hilbert's basis theorem \cite{AH07,Co67,Co87,HS12,NR19}, equivariant Hilbert-Serre theorem \cite{KLS,Na,NR17}, equivariant Buchberger algorithm \cite{Co87,HKL}, equivariant Hochster's formula \cite{MR20}. See, e.g., also \cite{DEF,LNNR,LNNR2,LR20,NS,SS16,SaSn17} for related results.

In the present work, as an initiation of the study of Problem~\ref{pb:main}, we search for equivariant versions of three classical results: Carath\'{e}odory's theorem, the Minkowski-Weyl theorem, and Gordan's lemma.

More specifically, the following questions and conjecture are considered:

\begin{question}
\label{q:Caratheodory}
 Given a $\Sym$-invariant cone $C\subseteq\RR^\infty$ with generating set $A$ and $\ub\in C$, what is the bound for the number of elements of $A$ needed to generate $\ub$? Is there a uniform bound for all $\ub\in C$?
\end{question}

\begin{question}
\label{q:Weyl-Minkowski}
 Let $C\subseteq\RR^\infty$ be a $\Sym$-equivariantly finitely generated cone. Is the dual cone $C^*$ of $C$ also $\Sym$-equivariantly finitely generated?
\end{question}

\begin{conjecture}
 \label{cj:Gordan}
 Let $C\subseteq\RR^\infty$ be a $\Sym$-equivariantly finitely generated rational cone. Then $M=C\cap\ZZ^\infty$ is a $\Sym$-equivariantly finitely generated normal monoid.
\end{conjecture}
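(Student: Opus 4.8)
I outline a possible approach to Conjecture~\ref{cj:Gordan}; it reduces the statement to the classical Gordan lemma together with a single combinatorial assertion, which is where the real difficulty lies. Throughout, $\langle S\rangle$ denotes the submonoid generated by a set $S$. To begin with, the normality of $M$ is automatic and uses no hypothesis on $C$: if $x\in\gp(M)$ and $kx\in M$ for some integer $k\ge1$, then $x\in\ZZ^\infty$ (since $\gp(M)\subseteq\ZZ^\infty$) and $x=\tfrac1k(kx)\in C$ because $C$ is a cone, hence $x\in C\cap\ZZ^\infty=M$; likewise $C=\cone(M)$, by applying the classical Gordan lemma at each finite truncation and taking unions. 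Thus the entire content of the conjecture is that $M$ is $\Sym$-equivariantly finitely generated \emph{as a monoid}.

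The plan is to truncate. Write $C=\conec(G)$ with $G$ a finite set of integer vectors, and for $m\gg0$ set $C^{(m)}:=C\cap\RR^m$, where $\RR^m\subseteq\RR^\infty$ is the coordinate subspace supported on $\{1,\dots,m\}$. By the equivariant Minkowski--Weyl theorem (cf.\ Question~\ref{q:Weyl-Minkowski}), $C$ admits a description $C=\{x:\langle a,x\rangle\ge0\text{ for all }a\in\Sym\cdot F\}$ with $F$ finite; as $\{a|_{\{1,\dots,m\}}:a\in\Sym\cdot F\}$ is then a finite set, each $C^{(m)}$ is a rational polyhedral cone in $\RR^m$. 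The classical Gordan lemma now makes $M^{(m)}:=M\cap\RR^m=C^{(m)}\cap\ZZ^m$ a finitely generated normal monoid; let $H_m$ be its (finite) Hilbert basis. Since every element of $M$ is finitely supported, $M=\bigcup_m M^{(m)}$, so $\bigcup_m H_m$ generates the $\Sym$-invariant monoid $M$, and everything reduces to the assertion $(\ast)$: \emph{the set $\bigcup_m H_m$ meets only finitely many $\Sym$-orbits}. Indeed, granting $(\ast)$, pick a finite $P\subseteq M$ meeting each occurring orbit; then $\bigcup_m H_m\subseteq\Sym\cdot P\subseteq M$, so $M=\langle\bigcup_m H_m\rangle\subseteq\langle\Sym\cdot P\rangle\subseteq M$, and $M=\langle\Sym\cdot P\rangle$ is $\Sym$-equivariantly finitely generated.

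A set of integer vectors meets only finitely many $\Sym$-orbits precisely when the supports and the sup-norms of its elements are bounded, so $(\ast)$ amounts to a bound, \emph{uniform in $m$}, on $|\supp(h)|$ and $\|h\|_\infty$ as $h$ ranges over $\bigcup_m H_m$. (We may assume $C$ pointed; the general case reduces to this by splitting off the $\Sym$-invariant lineality space $C\cap(-C)$, using that $\Sym$-invariant subgroups of $\ZZ^\infty$ are equivariantly finitely generated.) I expect producing this uniform bound to be the main obstacle, and it is the reason the statement is only a conjecture. The classical, finite-dimensional estimates are of no help: a Hilbert-basis element of $C^{(m)}$ lies in the fundamental parallelepiped of a simplicial subcone of $C^{(m)}$, but the number of extreme rays spanning such a subcone can be as large as $\dim C^{(m)}$, which grows with $m$, and with it the resulting bound. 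The required uniformity must therefore be forced by the symmetry. In particular, an affirmative answer to Question~\ref{q:Caratheodory} does not suffice, since the corresponding bound already fails for \emph{arbitrary} points of $C$ --- in $C=\conec(\{e_1\})=\RR^\infty_{\ge0}$ a vector with $k$ nonzero coordinates needs $k$ rays --- and the real content is that lattice points which are indecomposable at some finite level stay ``short''.

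In effect one wants a Noetherianity-up-to-symmetry phenomenon for submonoids of $\ZZ^\infty$, analogous to the equivariant Hilbert basis theorem. For submonoids of $\NN^\infty$ such a phenomenon is available via Dickson's lemma up to symmetry, but $C\cap\ZZ^\infty$ need not lie in any translate of the nonnegative orthant; and the situation is further complicated because indecomposability is not preserved by the $\Sym$-action, and an element of $H_m$ can cease to be indecomposable in a larger truncation $M^{(m')}$. Supplying a substitute for the missing positivity is precisely what is open. Concretely, I would first settle the conjecture for cones cut out by a single $\Sym$-orbit of linear inequalities and for $0/1$-cones, then attempt to bound $|\supp(h)|$ via a well-quasi-order argument on the ``coordinate patterns'' of candidate indecomposables, combined with the denominator bounds forced by the fixed finite set $G$; and, failing that, I would search for a counterexample among cones whose facet normals genuinely couple infinitely many coordinates.
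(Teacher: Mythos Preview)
Your reduction to assertion $(\ast)$ is sound, and you are right that this is where the content lies; but two points need correction. First, your invocation of an ``equivariant Minkowski--Weyl theorem'' to deduce finite generation of each truncation $C^{(m)}$ is based on a false premise: the paper shows (Theorem~\ref{thm:global-dual}) that the global dual $C^*\subseteq\RR^\NN$ of a nonnegative $\Sym$-invariant cone is \emph{never} $\Sym$-equivariantly finitely generated, so Question~\ref{q:Weyl-Minkowski} has a negative answer and cannot be cited. In the nonnegative case the finite generation of $C\cap\RR^m$ comes instead directly from Corollary~\ref{c:globallocalcones}. Second, the paper does not settle Conjecture~\ref{cj:Gordan} in full generality; it proves only the case $C\subseteq\RR^\infty_{\ge0}$ (Theorem~\ref{t:Gordan}), so your closing remarks about the difficulty of the general case remain accurate.

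What you are missing, in the nonnegative case, is the mechanism the paper uses to obtain the uniform bound you flag as the obstacle. The key is a \emph{merging criterion} for stabilization (Lemmas~\ref{l:stabilizing-cone} and~\ref{l:stabilizing-monoid}): an eventually saturated $\Sym$-invariant chain $(C_n)$ of finitely generated cones in $\RR^n_{\ge0}$ stabilizes if and only if, for $n\gg0$, every $\ub=(u_1,\dots,u_n)\in C_n$ admits $\sigma\in\Sym(n)$ with
\[
(u_{\sigma^{-1}(1)},\dots,u_{\sigma^{-1}(n-2)},\,u_{\sigma^{-1}(n-1)}+u_{\sigma^{-1}(n)})\in C_{n-1},
\]
and the \emph{same} criterion characterizes stabilization of a chain of finitely generated normal monoids in $\ZZ^n_{\ge0}$. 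Since the cone chain stabilizes by hypothesis, the monoid chain does too. The uniform bound on Hilbert bases then drops out of Lemma~\ref{l:globallocalmonoids}(v): if $\ub\in\Hc_n$ merges to $\ub'\in M_{n-1}$ with $\|\ub'\|_1=\|\ub\|_1$, one shows $\ub'\in\Hc_{n-1}$, because any decomposition $\ub'=\vb'+\wb'$ in $M_{n-1}$ can be ``unmerged'' back to a decomposition of $\ub$ in $M_n$ via the convexity-under-transposition property of $\Sym(n)$-invariant normal monoids (Corollary~\ref{c:between-monoid}). This yields $\|\Hc_n\|_1\le\|\Hc_{n-1}\|_1$ for $n\gg0$, which is exactly the eventual bound you were seeking. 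All of these steps genuinely use nonnegativity, so your instinct that positivity is the crux is correct; the paper's contribution is to show that $C\subseteq\RR^\infty_{\ge0}$ already suffices.
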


Note that a local version of Conjecture~\ref{cj:Gordan} (stated for a chain of finite dimensional cones) has been studied in a special case by Ananiadi in her thesis (see \cite[Conjecture 3.4.4]{An}). There the conjecture is verified  for a chain of cones that is $\Sym$-equivariantly generated by one element of $\ZZ^2_{\ge0}$ (see \cite[Theorem 3.4.6]{An}).

Our general approach to the above questions is to employ the framework developed in \cite{KLR}. More precisely, to any \emph{global} cone $C\subseteq\RR^\infty$ one can associate a chain of \emph{local} cones $\C=(C_n)_{n\ge1}$ with $C_n=C\cap\RR^n$ for $n\ge1$. One of the main results of \cite{KLR} is a \emph{local-global principle} (see \cite[Theorem 4.10]{KLR}) that allows to characterize certain properties of $C$ in terms of the chain $\C$ (see \cite[Corollary 5.4]{KLR}). Thus, one can study a global cone $C$ via the local cones $C_n$ in finite dimensional vector spaces and vice versa. A similar idea also applies to monoids in $\ZZ^\infty$.

Let us now outline the main results of the paper. Regarding Question~\ref{q:Caratheodory}, we show that if $A\subseteq\RR^\infty_{\ge0}$, then the number of elements of $A$ needed to generate $\ub\in C$ is bounded by the support size of $\ub$ (Theorem~\ref{thm-Caratheodory}). This bound depends on $\ub$ and there is no uniform bound for every element of $C$ (see Example~\ref{ex:no-bound}). Somewhat unexpectedly, Question~\ref{q:Weyl-Minkowski} admits a negative answer: we show that the dual $C^*$ of a cone $C\subseteq\RR^\infty_{\ge0}$ is too large to be  $\Sym$-equivariantly finitely generated (Theorem~\ref{thm:global-dual}). As a remedy for this, we consider the chain $\C=(C_n)_{n\ge1}$ of local cones and give a complete description of the chain $\C^*=(C_n^*)_{n\ge1}$ of dual cones (Theorem~\ref{t:Minkowski-Weyl}). Finally, Conjecture~\ref{cj:Gordan} is verified for nonnegative cones $C\subseteq\RR^\infty_{\ge0}$ (Theorem~\ref{t:Gordan}). The main ingredients in the proof of this result are new characterizations of related properties of chains of cones and monoids (Lemmas~\ref{l:globallocalmonoids}, \ref{l:stabilizing-cone}, \ref{l:stabilizing-monoid}). One might have noticed that nonnegativity is a vital assumption for our results. The main reason for this restriction is that nonnegativity is necessary to apply the local-global principle \cite[Theorem 4.10]{KLR} to cones and monoids (see \cite[Section 5]{KLR} and also Sections~\ref{section-cones}, \ref{sec:monoid} for more details).

The remaining parts of the paper are organized as follows. The next section introduces the needed notions and discusses the relationship between a global cone and the corresponding chain of local cones. The answers to Questions~\ref{q:Caratheodory} and \ref{q:Weyl-Minkowski} are given in Sections~\ref{sec-Caratheodory} and \ref{sec-Minkowski-Weyl}, respectively. Analogously to Section~\ref{section-cones}, we consider in Section~\ref{sec:monoid} the relationship between a global (normal) monoid and its chain of local monoids. Section~\ref{section-Gordan} contains the proof of Conjecture~\ref{cj:Gordan} for nonnegative cones and its consequences. We close the paper with a brief discussion of applications of our results in Section~\ref{sec-Discussion}.
%\smallskip

%--------------------------------------------------------------------------------------
%\newpage
\section{Chains of cones}
\label{section-cones}

In this section we briefly review the notion of $\Sym$-invariant chains of cones introduced in \cite{KLR}. Also the relationship between a chain of cones and the associated global cone is discussed. But let us start with symmetric groups and their actions.

\subsection{Symmetric groups}
\label{sec:Sym}

With $\NN = \{1,2,\dots \}$ being the set of positive integers, let $\Sym(\NN)$ denote the symmetric group on $\NN$, i.e. the group of all permutations of $\NN$.
For any $n\in \NN$ let $\Sym(n)$ be the symmetric group on $[n]=\{1,\dots,n\}$.  Regarding $\Sym(n)$ as the subgroup of $\Sym(\NN)$ that fixes every $k\ge n+1$, one obtains an increasing
chain of finite subgroups
$$
  \Sym(1)\subset \Sym(2)\subset\cdots\subset \Sym(n)\subset\cdots,
$$
whose limit is the infinite subgroup
\[
  \Sym(\infty)=\bigcup_{n\geq 1} \Sym(n)\subset \Sym(\NN).
\]
Notice that $\Sym(\infty)$ consists of all finite permutations of $\NN$, i.e. permutations that fix all but finitely many elements
of $\NN$.
In this paper, for simplicity, we often use $\Sym$ as an abbreviation for $\Sym(\infty)$.

%----------------------------------

\subsection{Sym action}
\label{sec:Sym-action}

Let $\RR^\NN$ be the usual Cartesian power of $\RR$ with index set $\NN$. For any $n\in\NN$ the vector space $\RR^n$ is considered as a subspace of $\RR^\NN$, in which each $(u_1,\dots,u_n)\in\RR^n$ is identified with
\[
 (u_1,\dots,u_n,0,0,\dots)\in\RR^\NN.
\]
In this manner, we have an increasing chain of finite dimensional subspaces
\[
\RR\subset \RR^2\subset\cdots\subset\RR^n\subset\cdots,
\]
whose limit is the infinite dimensional subspace
\[
 \RR^{\infty}=\bigcup_{n\geq 1} \RR^{n}\subset\RR^\NN.
\]
Note that $\RR^{\infty}$ consists of all elements of $\RR^\NN$ of finite support. Here, the \emph{support} of an element $\ub=(u_1,u_2,\dots)\in\RR^\NN$ is defined as
\[
 \supp(\ub)=\{i\in\NN\colon u_i\ne 0\}\subseteq \NN
\]
and $|\supp(\ub)|$ is called the \emph{support size} of $\ub.$ Let
\[
 \width(\ub)=\sup\{i\in\NN\colon i\in \supp(\ub)\}
\]
denote \emph{width} of $\ub$. Then $\RR^{\infty}$ can also be described as the subset of $\RR^\NN$ consisting of elements with finite width.  Moreover,
$\RR^n=\{\ub\in\RR^\NN\colon\width(\ub)\le n\}$ for all $n\in\NN$. By definition, it is also clear that $|\supp(\ub)|\le\width(\ub)$ for every $\ub\in\RR^\NN$.

As a vector space, $\RR^{\infty}$ has a canonical basis consisting of the vectors
$\eb_{i}$, $i\in\NN$, with
\[
  (\eb_{i})_{j} =
  \begin{cases}
    1 & \text {if $i = j$},\\
    0 & \text{otherwise}.
  \end{cases}
\]
Note also that $\eb_{1},\dots,\eb_{n}$ form the canonical basis of $\RR^n$ for all $n\in\NN$.

There is a natural action of $\Sym(\NN)$ on $\RR^\NN$ given by
\[
 \sigma(\ub)=(u_{\sigma^{-1}(1)},u_{\sigma^{-1}(2)},\dots)
 \quad\text{for  }\ \sigma\in \Sym(\NN)\ \text{ and }\ \ub\in\RR^\NN.
\]
This action restricts to an action of $\Sym(\infty)$ on $\RR^\infty$, and for each $n\in\NN$, of $\Sym(n)$ on $\RR^n$.
More precisely, for any $\vb=\sum_{i\ge1}v_i\eb_i\in\RR^\infty$ (with only finitely many nonzero $v_i$) one has
\[
 \sigma(\vb)=\sum_{i\ge1}v_{\sigma^{-1}(i)}\eb_i
 =\sum_{i\ge1}v_{i}\eb_{\sigma(i)}
 \quad\text{for every }\ \sigma\in \Sym(\infty).
\]
Thus, in particular, $\sigma(\eb_j)=\eb_{\sigma(j)}$ and $|\supp(\vb)|=|\supp(\sigma(\vb))|$ for any $\vb\in\RR^\infty.$

A subset $A\subseteq\RR^\NN$ is \emph{$\Sym(\NN)$-invariant} if $A$ is stable under the action of $\Sym(\NN)$, i.e.
\[
 \Sym(\NN)(A)\defas\{\sigma(\ub)\colon \sigma\in \Sym(\NN),\ \ub\in A\}
 \subseteq A.
\]
In the same manner, one defines the notions of \emph{$\Sym$-invariant} (recall that $\Sym=\Sym(\infty)$) and \emph{$\Sym(n)$-invariant} sets for each $n\in\NN.$ Observe that if $\ub\in\RR^\infty$ and $\sigma\in\Sym(\NN)$, then there exists $\pi\in \Sym$ such that $\sigma(\ub)=\pi(\ub)$. Hence
\[
 \Sym(\NN)(A)=\Sym(A)
 \quad\text{for all }\ A\subseteq\RR^\infty.
\]
So a subset of $\RR^\infty$ is $\Sym(\NN)$-invariant precisely when it is $\Sym$-invariant.

\subsection{Chains of cones}

For a subset $A\subseteq\RR^\NN$ let $\cncl(A)$ denote the cone generated by $A$, i.e.
\[
 \cncl(A)=
 \Big\{\sum_{i=1}^k\lambda_i\ab_i\colon k\in\NN,\ \ab_i\in A,\
\lambda_i\in\RR_{\geq 0}\Big\}\subseteq\RR^\NN.
\]
Evidently, if $A\subseteq\RR^\infty$ (respectively, $A\subseteq\RR^n$), then $\cncl(A)\subseteq\RR^\infty$ (respectively, $\cncl(A)\subseteq\RR^n$). A cone is \emph{finitely generated} if it has a finite generating set.

We are interested in cones $C\subseteq\RR^\infty$ that are $\Sym$-invariant and \emph{$\Sym$-equivariantly finitely generated}, i.e.
\[
 C=\cncl(\Sym(A))
\]
for some finite set $A\subset\RR^\infty$. To understand such a cone (viewed as a \emph{global} object) one may consider a chain of cones $\C=(C_n)_{n\ge1}$ with $C_n\subseteq \RR^n$ for all $n\ge 1$ (viewed as \emph{local} objects) such that $C$ is the \emph{limit} of this chain, i.e. $C=\bigcup_{n\ge1}C_n$.
It is shown in \cite{KLR} that certain properties of $C$ and the chain $\C$ are intimately related. To state the result let us recall some further notions.

Let $\C=(C_n)_{n\ge1}$ be an arbitrary chain of cones with $C_n\subseteq\RR^n$ for all $n\ge1.$ Then $\C$ is called \emph{$\Sym$-invariant} if
\[
 \cncl(\Sym(n)(C_m))\subseteq C_n
 \quad\text{for all }\ n\ge m\ge 1.
\]
In other words, a chain of cones $\C$ is $\Sym$-invariant precisely when the following conditions are satisfied:
\begin{enumerate}
 \item
 $\C$ is an increasing chain, i.e. $C_m\subseteq C_n$ for all $1\le m\le n$.
 \item
 $C_n$ is $\Sym(n)$-invariant for all $n\ge1.$
\end{enumerate}
We say that a $\Sym$-invariant chain $\C$ \emph{stabilizes} if there exists $r\in\NN$ such that
\[
 C_n=\cncl(\Sym(n)(C_m))
 \quad\text{for all }\ n\ge m\ge r.
\]
The smallest such $r$ is called the \emph{stability index} of $\C$ and denoted by $\ind(\C)$.

It is evident that a global cone $C\subseteq\RR^\infty$ is $\Sym$-invariant if an associated chain of local cones $\C=(C_n)_{n\ge1}$
is $\Sym$-invariant in the above sense. Moreover, when the local cones are nonnegative, their stabilization and finite generation characterize the equivariant finite generation of the global cone, as exhibited in the next result. This is a special instance of local-global principles studied in \cite{KLR}.

\begin{lemma}[{\cite[Corollary 5.4]{KLR}}]
	\label{l:globallocalcones}
	Let $\C=(C_{n})_{n\geq 1}$ be a $\Sym$-invariant chain of cones with
	$C_{n} \subseteq \RR^{n}_{\geq 0}$ for all $n\ge1$. Let $C$ denote the limit cone.  Then
	the following are equivalent:
	\begin{enumerate}
		\item
		$C$ is $\Sym$-equivariantly finitely generated;
		\item
		$\C$ stabilizes and $C_{n}$ is finitely generated for all $n\gg0$;
		\item
		There exists an $s\in\NN$ such that for all $n\ge s$ the following hold:
		\begin{enumerate}
			\item
			$C \cap \RR^{n}= C_{n}$,
			\item
			$C_{n}$ is finitely generated by elements of support size at most $s$.
		\end{enumerate}
	\end{enumerate}
\end{lemma}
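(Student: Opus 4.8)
The plan is to prove the cycle $(1)\Rightarrow(3)\Rightarrow(2)\Rightarrow(1)$; first I would record two elementary facts about the $\Sym$-action that get used throughout. \emph{Fact~1.} If $\vb\in\RR^{\infty}$ has $\supp(\vb)\subseteq[n]$ and $\sigma\in\Sym$ has $\supp(\sigma(\vb))\subseteq[n]$, then $i\mapsto\sigma(i)$ is an injection of $\supp(\vb)$ into $[n]$, which extends to some $\tau\in\Sym(n)$ with $\tau(\vb)=\sigma(\vb)$; so a permutation that does not move the support of $\vb$ outside $[n]$ may be replaced by one in $\Sym(n)$. \emph{Fact~2.} If $\vb\in\RR^{n}$ and $|\supp(\vb)|\le m\le n$, then some $\sigma\in\Sym(n)$ has $\width(\sigma(\vb))\le m$; i.e.\ a vector of small support size can be moved into $\RR^{m}$ by an element of $\Sym(n)$. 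I also use that $C$ is $\Sym$-invariant and that $\C$ is increasing, so $C=\bigcup_{n\ge r}C_{n}$ for every $r$.

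\emph{$(1)\Rightarrow(3)$.} I would write $C=\cncl(\Sym(A))$ with $A\subset\RR^{\infty}$ finite, choose $r$ with $A\subseteq C_{r}$ (possible since $A$ is finite and $\C$ is increasing), and put $s\defas\max(r,\max_{\ab\in A}\width(\ab))$; note $A\subseteq C\subseteq\RR^{\infty}_{\ge0}$. Fix $n\ge s$. If $\ub\in C\cap\RR^{n}$, write $\ub=\sum_{i}\lambda_{i}\sigma_{i}(\ab_{i})$ with $\lambda_{i}>0$, $\ab_{i}\in A$; since every summand is a nonnegative vector there is no cancellation, whence $\supp(\sigma_{i}(\ab_{i}))\subseteq\supp(\ub)\subseteq[n]$ and, by Fact~1, $\sigma_{i}(\ab_{i})\in\Sym(n)(A)$, so $C\cap\RR^{n}\subseteq\cncl(\Sym(n)(A))$. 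Conversely $\Sym(n)(A)\subseteq C_{n}$ because $A\subseteq C_{r}\subseteq C_{n}$ and $C_{n}$ is $\Sym(n)$-invariant, hence $\cncl(\Sym(n)(A))\subseteq C_{n}\subseteq C\cap\RR^{n}$. Therefore $C_{n}=C\cap\RR^{n}=\cncl(\Sym(n)(A))$; this is (3)(a), and since the elements of $\Sym(n)(A)$ have support size $\le\max_{\ab}\width(\ab)\le s$, it also gives (3)(b).

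\emph{$(3)\Rightarrow(2)$ and $(2)\Rightarrow(1)$.} For the first, finite generation of $C_{n}$ when $n\ge s$ is (3)(b), and I claim $\ind(\C)\le s$. For $n\ge m\ge s$ the inclusion $\cncl(\Sym(n)(C_{m}))\subseteq C_{n}$ is $\Sym$-invariance of the chain; for the reverse it suffices, by (3)(b), to place each generator $\vb$ of $C_{n}$ — which has $|\supp(\vb)|\le s\le m$ — into $\cncl(\Sym(n)(C_{m}))$. By Fact~2 pick $\sigma\in\Sym(n)$ with $\width(\sigma(\vb))\le m$; then $\sigma(\vb)\in C_{n}\cap\RR^{m}$, and using (3)(a) for $n$ and for $m$, $C_{n}\cap\RR^{m}=(C\cap\RR^{n})\cap\RR^{m}=C\cap\RR^{m}=C_{m}$, so $\vb=\sigma^{-1}(\sigma(\vb))\in\Sym(n)(C_{m})$. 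For $(2)\Rightarrow(1)$, choose $r$ at least $\ind(\C)$ and large enough that $C_{r}$ is finitely generated, say $C_{r}=\cncl(B)$ with $B$ finite; for $n\ge r$, stabilization together with linearity of the action gives $C_{n}=\cncl(\Sym(n)(C_{r}))=\cncl(\Sym(n)(B))$. Then $\cncl(\Sym(B))\supseteq C_{n}$ for all $n\ge r$, so $\cncl(\Sym(B))\supseteq C$; conversely, given $\ub=\sum_{i=1}^{k}\lambda_{i}\sigma_{i}(\bb_{i})$ with $\bb_{i}\in B$, choose $n\ge r$ with $\width(\sigma_{i}(\bb_{i}))\le n$ for all $i$ and rewrite each $\sigma_{i}(\bb_{i})=\tau_{i}(\bb_{i})$ with $\tau_{i}\in\Sym(n)$ by Fact~1, so $\sigma_{i}(\bb_{i})\in\Sym(n)(B)$ and $\ub\in\cncl(\Sym(n)(B))=C_{n}\subseteq C$. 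Hence $C=\cncl(\Sym(B))$ is $\Sym$-equivariantly finitely generated.

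\emph{Main obstacle.} Everything outside $(1)\Rightarrow(3)$ is bookkeeping with Facts~1 and~2. The substantive point is the identity $C\cap\RR^{n}=C_{n}$ inside $(1)\Rightarrow(3)$: a priori the limit only records $\bigcup_{m}C_{m}$, so a vector of width $\le n$ could conceivably be produced from generators of width $>n$, forcing $C_{n}\subsetneq C\cap\RR^{n}$. It is exactly the nonnegativity $C_{n}\subseteq\RR^{n}_{\ge0}$ that rules this out, by forbidding cancellation among the summands $\lambda_{i}\sigma_{i}(\ab_{i})$; this is why nonnegativity is indispensable, both here and for the paper's subsequent results. Equivalently, the statement is the specialization to nonnegative cones of the general local-global principle of \cite{KLR}, which is how it is recorded there as Corollary~5.4.
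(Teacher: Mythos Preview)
Your proof is correct and self-contained. Note, however, that the paper does not actually prove this lemma: it is quoted verbatim as \cite[Corollary~5.4]{KLR} and invoked without argument. What the paper \emph{does} prove is the saturated-chain variant (Corollary~\ref{c:globallocalcones}), and only the implication (i)$\Rightarrow$(iii) there, the other directions being referred back to the cited lemma. Your argument for $(1)\Rightarrow(3)$ is essentially the same mechanism the paper uses in that proof---nonnegativity forces $\supp(\sigma_i(\ab_i))\subseteq\supp(\ub)\subseteq[n]$, whence each $\sigma_i(\ab_i)$ may be replaced by a $\Sym(n)$-translate of $\ab_i$---so the core idea coincides. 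The difference is scope: you additionally supply the implications $(3)\Rightarrow(2)$ and $(2)\Rightarrow(1)$, which the paper leaves to the external reference. These extra steps are routine (your Facts~1 and~2 plus bookkeeping), but they make your write-up independent of \cite{KLR}, which the paper's treatment is not.
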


Given a global cone $C\subseteq\RR^\infty$, the largest chain of local cones with limit $C$ is the \emph{saturated} chain $\overline{\C}=(\overline{C}_{n})_{n\geq 1}$ defined by $\overline{C}_n=C\cap \RR^n$ for all $n\ge1$.
Note, however, that when $C\subseteq\RR^\infty_{\ge0}$ is $\Sym$-equivariantly finitely generated, any $\Sym$-invariant chain of cones $\C=(C_{n})_{n\geq 1}$ with limit $C$ does not differ too much from the saturated chain $\overline{\C}$. In fact, it follows from Lemma~\ref{l:globallocalcones} that $\C$ is \emph{eventually saturated}, i.e. $C_n=C\cap \RR^n$ for all $n\gg0$. Thus, $\C$ coincides with $\overline{\C}$ up to finitely many members.

In this paper, Lemma~\ref{l:globallocalcones} is mostly applied to the saturated chain $\overline{\C}$. Since the result in that case has a rather more concise form, we state it here for the sake of convenience.

\begin{corollary}
  \label{c:globallocalcones}
  Let $C\subseteq\RR^\infty_{\ge0}$ be a $\Sym$-invariant cone and let $\overline{\C}=(\overline{C}_{n})_{n\geq 1}$ be its saturated chain of local cones.  Then the following are equivalent:
  \begin{enumerate}
  \item
    $C$ is $\Sym$-equivariantly finitely generated;
  \item
    $\overline{\C}$ stabilizes and $\overline{C}_{n}$ is finitely generated for all $n\ge1$;
  \item
    There exists an $s\in\NN$ such that $\overline{C}_{n}$ is finitely generated by elements of support size at most $s$ for all $n\ge 1$.
  \end{enumerate}
\end{corollary}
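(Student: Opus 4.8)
The plan is to obtain the corollary from Lemma~\ref{l:globallocalcones} applied to the saturated chain $\overline{\C}=(\overline{C}_{n})_{n\ge1}$, the only extra content being to replace the ``for all $n\gg0$'' clauses by ``for all $n\ge1$''. As a preliminary I would check that $\overline{\C}$ fits the hypotheses of the lemma with limit cone $C$: each $\overline{C}_n=C\cap\RR^n$ lies in $\RR^n_{\ge0}$ because $C\subseteq\RR^\infty_{\ge0}$; the chain is increasing and each $\overline{C}_n$ is $\Sym(n)$-invariant since $C$ is $\Sym$-invariant and $\RR^n$ is $\Sym(n)$-invariant, so $\overline{\C}$ is $\Sym$-invariant; and $\bigcup_{n\ge1}\overline{C}_n=C\cap\RR^\infty=C$. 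Hence Lemma~\ref{l:globallocalcones} applies to $\overline{\C}$, and it remains to prove the three implications.

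For $(1)\Rightarrow(2)$, Lemma~\ref{l:globallocalcones} already gives that $\overline{\C}$ stabilizes and that $\overline{C}_n$ is finitely generated for all $n\ge N$ for some $N$. To extend finite generation to every $n\ge1$ I would use that for $n\le N$ one has $\overline{C}_n=(C\cap\RR^N)\cap\RR^n=\overline{C}_N\cap\RR^n$; since $\overline{C}_N$ is finitely generated it is polyhedral by the classical Minkowski--Weyl theorem, so its intersection with the coordinate subspace $\RR^n$ is polyhedral as well, hence finitely generated.

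For $(2)\Rightarrow(3)$, put $r=\ind(\overline{\C})$, so $\overline{C}_n=\cncl(\Sym(n)(\overline{C}_r))$ for all $n\ge r$, and write $\overline{C}_r=\cncl(G)$ with $G\subseteq\RR^r$ finite (possible by hypothesis~(2)). A short computation with the operator $\cncl$ shows $\cncl(\Sym(n)(\overline{C}_r))=\cncl(\Sym(n)(G))$, so for $n\ge r$ the cone $\overline{C}_n$ is finitely generated by the finite set $\Sym(n)(G)$, whose elements have the same support sizes as those of $G$ and hence support size at most $r$. For $n<r$, $\overline{C}_n$ is finitely generated by hypothesis~(2), and its generators, lying in $\RR^n$, have support size at most $n<r$. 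Thus $s=r$ has the required property. Finally, for $(3)\Rightarrow(1)$: since $\overline{\C}$ is the saturated chain, $C\cap\RR^n=\overline{C}_n$ holds for all $n$ by definition, so condition~(3) of Lemma~\ref{l:globallocalcones} is met with the same $s$ --- its part (a) is automatic and its part (b) is exactly hypothesis~(3) restricted to $n\ge s$ --- whence $C$ is $\Sym$-equivariantly finitely generated.

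This corollary is essentially a repackaging of Lemma~\ref{l:globallocalcones}, so I do not expect a genuine obstacle; the only external input is the elementary fact that a coordinate slice of a finitely generated cone is finitely generated, and the one point demanding care is that the constant $s$ produced in $(2)\Rightarrow(3)$ and consumed in $(3)\Rightarrow(1)$ be uniform in $n$, which is precisely why one keeps track of support sizes (invariant under $\Sym(n)$) rather than widths.
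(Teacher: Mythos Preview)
Your proof is correct, but it takes a somewhat different route from the paper's. The paper observes that, in view of Lemma~\ref{l:globallocalcones}, only (i)$\Rightarrow$(iii) needs a direct argument, and then proves this in one step: given $C=\cncl(\Sym(A))$ with $A=\{\ab_1,\dots,\ab_k\}$, it normalizes each $\ab_i$ so that $\width(\ab_i)=|\supp(\ab_i)|=s_i$, sets $A_n=A\cap\RR^n$, and shows $\overline{C}_n=\cncl(\Sym(n)(A_n))$ for every $n$ by exploiting nonnegativity (a positive combination cannot have smaller width than any of its summands). This yields an \emph{explicit} finite generating set for each $\overline{C}_n$ with support sizes bounded by $s=\max_i s_i$.

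By contrast, you close the cycle $(1)\Rightarrow(2)\Rightarrow(3)\Rightarrow(1)$: for $(1)\Rightarrow(2)$ you invoke the classical Minkowski--Weyl theorem to handle the finitely many small $n$, and for $(2)\Rightarrow(3)$ you take $s=\ind(\overline{\C})$ and read generators off the stabilization. Both arguments are clean; the paper's gives a tighter, constructive bound on $s$ (in terms of the given generators rather than the stability index) and shows more transparently where nonnegativity enters, while yours is slightly more modular and avoids re-deriving the width argument already implicit in Lemma~\ref{l:globallocalcones}.
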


\begin{proof}
 In view of Lemma \ref{l:globallocalcones} it remains to prove the implication (i)$\Rightarrow$(iii). To this end, suppose
 $C=\cncl(\Sym(A))$
 with $A=\{\ab_1,\dots,\ab_k\}\subset\RR^\infty_{\ge0}.$ By symmetry, we may assume that $\ab_i\in\RR^{s_i}$ with $s_i=|\supp(\ab_i)|$ for $i=1,\dots,k$. Setting $A_n=A\cap\RR^n$ it is enough to show that
 $\overline{C}_n=\cncl(\Sym(n)(A_n))$ for $n\ge1$. The inclusion ``$\supseteq$'' is clear since $\overline{C}_n=C\cap \RR^n$. 
 
 For the reverse inclusion, let $\ub\in \overline{C}_n.$ Then there exist $\lambda_1,\dots,\lambda_m>0$ and $\sigma_1,\dots,\sigma_m\in\Sym$ such that $\ub=\sum_{j=1}^m\lambda_j\sigma_j(\ab_{i_j})$ for some $i_1,\dots,i_m\in[k].$ Since each $\sigma_j(\ab_{i_j})$ belongs to $\RR^{\infty}_{\geq 0}$, we must have
 $$
  \width(\sigma_j(\ab_{i_j}))\le\width(\ub)\le n,
 $$
 i.e. $\sigma_j(\ab_{i_j})\in \RR^n$ for $j=1,\dots,m.$ Moreover, from
$$
 s_{i_j}=|\supp(\ab_{i_j})|=|\supp(\sigma_j(\ab_{i_j}))|
 \le \width(\sigma_j(\ab_{i_j}))\le n
$$
it follows that
$
\ab_{i_j}\in A\cap\RR^{s_{i_j}}\subseteq A\cap\RR^n=A_n
$
for $j=1,\dots,m.$
Now that  $\ab_{i_j}$ and $\sigma_j(\ab_{i_j})$ both belong to $\RR^n$, we can find $\pi_j\in\Sym(n)$ such that $\sigma_j(\ab_{i_j})=\pi_j(\ab_{i_j})$. Hence,
\[
 \ub=\sum_{j=1}^m\lambda_j\sigma_j(\ab_{i_j})
 =\sum_{j=1}^m\lambda_j\pi_j(\ab_{i_j})
 \in\cncl(\Sym(n)(A_n))
\]
as desired.
\end{proof}

%--------------------------------------------------------------------------------------

\section{Equivariant Carath\'{e}odory's theorem}
\label{sec-Caratheodory}

A classical theorem of Carath\'{e}odory states that any element of a finite dimensional cone $C=\cncl(A)\subseteq\RR^n$ is a conical combination of at most $\dim C$ elements in $A$ (see, e.g., \cite[Theorem 1.55]{BG}). The main goal of this section is to prove the following equivariant version of this result.

\begin{theorem}
\label{thm-Caratheodory}
 Let $C\subseteq \RR^{\infty}$ be a $\Sym$-invariant cone that is $\Sym$-equivariantly generated by a set $A\subseteq\RR^{\infty}_{\geq 0}$. Then any element $\ub\in C$ with support size $s=|\supp(\ub)|$ can be written as a conical combination of at most $s$ elements in $\Sym(A)$.
\end{theorem}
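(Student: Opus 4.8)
The plan is to reduce the claim to the classical finite-dimensional Carath\'{e}odory theorem by using the nonnegativity of the generators to confine everything to a single $s$-dimensional coordinate subspace. First I would use $\ub\in C=\cncl(\Sym(A))$ to write $\ub=\sum_{j=1}^m\lambda_j\vb_j$ with $\lambda_j>0$ and $\vb_j\in\Sym(A)$, say $\vb_j=\sigma_j(\ab_j)$ for suitable $\sigma_j\in\Sym$ and $\ab_j\in A$.

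The crucial observation is that each $\vb_j$ again lies in $\RR^\infty_{\ge0}$, since the $\Sym$-action merely permutes coordinates and hence preserves nonnegativity. Consequently there is no cancellation in the sum $\ub=\sum_j\lambda_j\vb_j$: for every coordinate index $i$ we get $u_i\ge\lambda_j(\vb_j)_i\ge0$, so $\supp(\vb_j)\subseteq\supp(\ub)$ for all $j$. Therefore $\ub$ together with all of $\vb_1,\dots,\vb_m$ belongs to the coordinate subspace $V=\Span\{\eb_i\mid i\in\supp(\ub)\}\subseteq\RR^\infty$, whose dimension is exactly $s=|\supp(\ub)|$.

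Finally I would invoke the classical Carath\'{e}odory theorem \cite[Theorem 1.55]{BG} inside $V\cong\RR^s$: the point $\ub$ lies in the cone generated by the finite set $\{\vb_1,\dots,\vb_m\}\subseteq V$, so it is a conical combination of at most $\dim V=s$ of the $\vb_j$, each of which belongs to $\Sym(A)$. This yields precisely the claimed bound.

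I do not anticipate a genuine obstacle; the only delicate point is the support-containment step, and that is exactly where the hypothesis $A\subseteq\RR^\infty_{\ge0}$ is indispensable — without it the vectors $\vb_j$ could occupy coordinates outside $\supp(\ub)$, and the reduction to one fixed $s$-dimensional subspace would break down. (That the bound cannot be taken uniform over all $\ub\in C$ is a separate matter, to be settled by an explicit example.)
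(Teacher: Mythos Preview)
Your argument is correct and follows essentially the same route as the paper: use nonnegativity of the generators to confine the conical representation of $\ub$ to an $s$-dimensional coordinate subspace, then invoke the classical Carath\'{e}odory theorem there. The only cosmetic difference is that the paper first permutes $\ub$ into $\RR^s$ and argues via $\width$, whereas you work directly in $\Span\{\eb_i\mid i\in\supp(\ub)\}$ via the support-containment $\supp(\vb_j)\subseteq\supp(\ub)$; the substance is identical.
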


\begin{proof}
Evidently, we can find a permutation $\sigma\in \Sym$ such that $\sigma(\ub)\in\RR^s.$
By assumption, $\sigma(\ub)\in C=\cncl(\Sym(A))$. So there exist $\ab_1,\dots,\ab_k\in\Sym(A)$ and $\lambda_1,\dots,\lambda_k>0$ such that
\[
 \sigma(\ub)=\sum_{i=1}^k\lambda_i\ab_i.
\]
Since $A\subseteq\RR^{\infty}_{\geq 0}$, this representation implies that
\[
 \width(\ab_i)\le\width(\ub)\le s\
 \text{ for }\ i=1,\dots,k.
\]
Hence, $\ab_i\in\RR^s$ for $i=1,\dots,k.$ Note that $\sigma(\ub)\in\cncl(\ab_1,\dots,\ab_k)$. So by Cara\-th\'{e}o\-dory's theorem, there exists a subset $B\subseteq\{\ab_1,\dots,\ab_k\}$ with $|B|\le s$ such that $\sigma(\ub)\in\cncl(B).$ It follows that
\[
 \ub=\sigma^{-1}(\sigma(\ub))\in\cncl(\sigma^{-1}(B))
\]
with $\sigma^{-1}(B)=\{\sigma^{-1}(\bb)\colon \bb\in B\}\subseteq\Sym(A)$. This concludes the proof.
\end{proof}

In view of Carath\'{e}odory's theorem, one might expect that Theorem~\ref{thm-Caratheodory} continues to hold without the assumption $A\subseteq\RR^{\infty}_{\geq 0}$, or the bound by the support size $s=|\supp(\ub)|$ (which depends on $\ub$) can be replaced by a uniform bound independent of $\ub$. Unfortunately, this is not true, as illustrated by the following examples.

\begin{example}
	Consider the cone $C=\cncl(\Sym(\ub_1,\ub_2))$ with
	\[
	\ub_1=(1,1),\quad \ub_2=(1,-1).
	\]
	Then every element of $\Sym(\ub_1,\ub_2)$ has support size 2. Thus, $\eb_1=\frac{1}{2}\ub_1+\frac{1}{2}\ub_2\in C$, having support size 1, cannot be a conical combination of one element of $\Sym(\ub_1,\ub_2)$.
\end{example}

\begin{example}
\label{ex:no-bound}
	Let $C=\RR^{\infty}_{\geq 0}=\cncl(\Sym(\eb_1))$. Then for any $n\ge1$ the element
	\[
	\ub_n=\eb_1+\cdots+\eb_n
	\]
	cannot be a conical combination of less than $n$ elements of $\Sym(\eb_1)=\{\eb_i\colon i\in\NN\}.$
\end{example}

%----------------------------------------------------------------------
\section{Equivariant Minkowski-Weyl theorem}
\label{sec-Minkowski-Weyl}

The Minkowski-Weyl theorem says that any finitely generated cone $C\subseteq\RR^n$ can also be represented as an intersection of finitely many linear halfspaces and vice versa (see, e.g., \cite[Theorem 1.15]{BG}). In other words, the dual cone of any finitely generated cone in $\RR^n$ is again finitely generated. As we will see in Theorem~\ref{thm:global-dual}, this result cannot be extended ``globally'' to every $\Sym$-equivariantly finitely generated cone $C\subseteq\RR^\infty$. Instead, we obtain Theorem~\ref{t:Minkowski-Weyl} that can be seen as a \emph{local} extension of the Minkowski-Weyl theorem to the equivariant setting.

In what follows we identify the dual vector space of $\RR^{\infty}$ with $\RR^\NN$ via the dual pairing
\begin{align*}
\langle\cdot,\cdot\rangle:\RR^{\infty}\times\RR^\NN&\to\RR,\\
(\ub,\vb)&\mapsto \langle\ub,\vb\rangle=\sum_{i\ge1}u_iv_i,
\end{align*}
where $\ub=(u_i)\in\RR^{\infty}$ and $\vb=(v_i)\in\RR^{\NN}$. Restricting this dual pairing to $\RR^n\times\RR^n$ gives the usual identification of the dual space of $\RR^n$ with itself for every $n\in\NN$.

\begin{definition}
	Let $C\subseteq\RR^{\infty}$ be a cone. The \emph{dual cone} of $C$ is defined as
	\[
	C^*=\{\ub\in \RR^{\NN}\colon \langle \ub,\vb\rangle\ge 0\
	\text{ for all }\ \vb\in C\}.
	\]
	When $C\subseteq\RR^n$ for a fixed $n\in\NN$, we define $C^*\subseteq\RR^n$ to be the standard dual cone of $C$, i.e.
	\[
	C^*=\{\ub\in \RR^{n}\colon \langle \ub,\vb\rangle\ge 0\
	\text{ for all }\ \vb\in C\}.
	\]
\end{definition}

\begin{remark}
	If $C$ is a cone in $\RR^{n}$, then $C$ is also a cone in $\RR^{m}$ for any $m\ge n$. So the above definition of the dual cone of $C$ \emph{depends} on the ambient space in which $C$ lives. Nevertheless, this will cause no confusion, because for a given chain of cones  $\C=(C_{n})_{n\geq 1}$ we always mean the embedding $C_n\subseteq\RR^n$ when considering the dual cone of $C_n$.
\end{remark}

\begin{lemma}
	\label{l:dual-cone}
	The following statements hold for a cone $C$:
	\begin{enumerate}
		\item
		If $C\subseteq\RR^{\infty}$ is $\Sym$-invariant, then $C^*\subseteq\RR^{\NN}$ is also $\Sym$-invariant.
		\item
		If $C\subseteq\RR^n$ is $\Sym(n)$-invariant, then $C^*\subseteq\RR^n$ is also $\Sym(n)$-invariant.
	\end{enumerate}
\end{lemma}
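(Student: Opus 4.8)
The plan is to reduce both parts to one elementary observation: the dual pairing is equivariant, in the sense that $\langle\sigma(\ub),\vb\rangle=\langle\ub,\sigma^{-1}(\vb)\rangle$ for every permutation $\sigma$ and every pair $\ub,\vb$ for which the pairing is defined. This is immediate from the coordinate description of the action recalled in Section~\ref{sec:Sym-action}: writing $\ub=(u_i)$ and $\vb=(v_i)$, the left-hand side is $\sum_{i\ge1}u_{\sigma^{-1}(i)}v_i$, and the substitution $j=\sigma^{-1}(i)$ rewrites this as $\sum_{j\ge1}u_jv_{\sigma(j)}=\langle\ub,\sigma^{-1}(\vb)\rangle$. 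There is no convergence subtlety, since in part (i) the vector $\vb$ ranges over $C\subseteq\RR^\infty$, hence has finite support, making the sum finite, and in part (ii) everything takes place in $\RR^n$.

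Granting this identity, part (i) is quick. Fix $\ub\in C^*$ and $\sigma\in\Sym$; I must show $\sigma(\ub)\in C^*$, that is, $\langle\sigma(\ub),\vb\rangle\ge0$ for all $\vb\in C$. By the equivariance identity, $\langle\sigma(\ub),\vb\rangle=\langle\ub,\sigma^{-1}(\vb)\rangle$. Since $C$ is $\Sym$-invariant and $\sigma^{-1}\in\Sym$, we have $\sigma^{-1}(\vb)\in C$, so $\langle\ub,\sigma^{-1}(\vb)\rangle\ge0$ by definition of $C^*$. Hence $\sigma(\ub)\in C^*$, and $C^*$ is $\Sym$-invariant. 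Part (ii) is proved verbatim, replacing $\Sym$ by $\Sym(n)$ and the ambient spaces $\RR^\infty,\RR^\NN$ by $\RR^n$: for $\ub\in C^*\subseteq\RR^n$ and $\sigma\in\Sym(n)$ one gets $\langle\sigma(\ub),\vb\rangle=\langle\ub,\sigma^{-1}(\vb)\rangle\ge0$ for all $\vb\in C$, using $\Sym(n)$-invariance of $C$ to place $\sigma^{-1}(\vb)$ in $C$.

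I do not expect a real obstacle here; the statement is essentially a bookkeeping exercise. The only points deserving care are to state the equivariance of the pairing cleanly (together with the implicit convention that $C^*\subseteq\RR^\NN$ is tested against the restriction to $\Sym$ of the $\Sym(\NN)$-action on $\RR^\NN$), and to note that finiteness of supports on the $\RR^\infty$ side legitimizes the reindexing of the defining sum.
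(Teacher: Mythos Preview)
Your proof is correct and matches the paper's own argument essentially line for line: both establish the identity $\langle\sigma(\ub),\vb\rangle=\langle\ub,\sigma^{-1}(\vb)\rangle$ via the substitution $j=\sigma^{-1}(i)$ and then invoke $\Sym$-invariance of $C$ to conclude. The paper likewise only writes out part (i) and declares part (ii) similar.
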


\begin{proof}
We only prove (i), since the proof of (ii) is similar.	Let $\ub\in C^*$ and $\sigma\in \Sym$. For any $\vb\in C$, one has $\sigma^{-1}(\vb)\in C$ because $C$ is $\Sym$-invariant. Hence,
	\[
	\langle \sigma(\ub),\vb\rangle
	=\sum_{i\ge1}u_{\sigma^{-1}(i)}v_i
	=\sum_{i\ge1}u_iv_{\sigma(i)}
	=\langle \ub,\sigma^{-1}(\vb)\rangle\ge0.
	\]
Since this is true for every $\vb\in C$, we deduce that $\sigma(\ub)\in C^*$. Thus, $C^*$ is $\Sym$-invariant.
\end{proof}

In order to study the Minkowski-Weyl theorem in the equivariant setting, one needs to understand the dual cone $C^*$ of a $\Sym$-equivariantly finitely generated cone $C\subseteq\RR^\infty$. This turns out to be difficult because $C^*$ is not $\Sym$-equivariantly finitely generated (see Theorem~\ref{thm:global-dual}). On the other hand, it is possible to describe the chain $\C^*=(C_{n}^*)_{n\geq 1}$ of dual cones of a chain $\C=(C_{n})_{n\geq 1}$ of local cones associated to $C$, provided that $C$ is nonnegative. In the trivial case $C=\{\nub\}$, one has $C_{n}^*=\RR^n$ for all $n\geq 1$. For ease of presentation, we will exclude this case and assume that the chain $\C=(C_{n})_{n\geq 1}$ is \emph{non-trivial}, that is, $C_{n}\ne \{\nub\}$ for some $n\ge1$. 
In what follows, by abuse of notation, for $\ub=(u_1,\dots,u_r)\in\RR^r$ and $u_{r+1},\dots,u_n\in\RR$ we write $(\ub,u_{r+1},\dots,u_n)$ to mean the element $(u_1,\dots,u_r,u_{r+1},\dots,u_n)\in\RR^n.$

\begin{theorem}
	\label{t:Minkowski-Weyl}
	Let $\C=(C_{n})_{n\geq 1}$ be a stabilizing non-trivial $\Sym$-invariant chain of finitely generated cones $C_{n} \subseteq \RR^{n}_{\geq 0}$ with $r=\ind(\C).$ Let $\C^*=(C_{n}^*)_{n\geq 1}$ be the chain of dual cones with $C_{n}^* \subseteq \RR^{n}$ for $n\ge1$. Then a generating set of $C_r^*$ completely determines that of $C_n^*$ for all $n\ge r.$ More precisely, there exists a finite subset $F_r\subset C_r^*$ with the following properties:
	\begin{enumerate}
	 \item
	 If $\ub=(u_1,\dots,u_r)\in F_r$, then $u_1\le\cdots\le u_r$.
	 \item
	 For all $n\ge r$ it holds that
	 \begin{equation}
		\label{e:dual-cone}
		C_n^*=\cncl(\Sym(n)(F_n\cup\{\eb_n\})),
	\end{equation}
	where
	\begin{equation}
		\label{e:generating-set}
		F_{n}
		=\{(\ub,u_{r},\dots,u_r)\in \RR^{n}\colon \ub=(u_1,\dots,u_r)\in F_r\}
		\quad \text{for }\ n\ge r+1.
	\end{equation}
	\end{enumerate}
\end{theorem}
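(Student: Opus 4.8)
The plan is to pass to the local duals carefully, using both containment directions of the defining inequality $\langle\ub,\vb\rangle\ge 0$ and exploiting the structure of a stabilizing chain. First I would analyze $C_r^*$ itself: it is a finitely generated cone in $\RR^r$ by the Minkowski--Weyl theorem (applied finite-dimensionally, since $C_r$ is finitely generated). The key observation is that $C_r\subseteq\RR^r_{\ge0}$ contains $\eb_1,\dots,\eb_r$ only after we account for invariance — in fact $C_r$ is $\Sym(r)$-invariant, so by Lemma~\ref{l:dual-cone}(ii) the cone $C_r^*$ is $\Sym(r)$-invariant as well. Thus $C_r^*$ has a $\Sym(r)$-invariant finite generating set, and by averaging/sorting coordinates within each $\Sym(r)$-orbit I can choose orbit representatives $\ub=(u_1,\dots,u_r)$ with $u_1\le\cdots\le u_r$; collect these into the finite set $F_r$. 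This gives property (i). One subtlety: one must separate out the ``ray at infinity'' direction. Since each $C_n\subseteq\RR^n_{\ge0}$, we always have $\eb_n\in C_n^*$ (indeed $\RR^n_{\ge0}\subseteq C_n^*$ componentwise is false, but $\eb_i\in C_n^*$ holds because $\langle\eb_i,\vb\rangle=v_i\ge0$ for $\vb\in C_n\subseteq\RR^n_{\ge0}$), so adjoining $\Sym(n)(\eb_n)=\{\eb_1,\dots,\eb_n\}$ costs nothing and will absorb the coordinates that ``appear'' as $n$ grows.

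Next, the heart of the argument: show $C_n^* = \cncl(\Sym(n)(F_n\cup\{\eb_n\}))$ for $n\ge r$, where $F_n$ is obtained from $F_r$ by repeating the last coordinate $u_r$ in positions $r+1,\dots,n$ as in \eqref{e:generating-set}. For the inclusion ``$\supseteq$'': each generator of $F_r$ lies in $C_r^*$; I must check its extension $(\ub,u_r,\dots,u_r)$ lies in $C_n^*$, i.e. pairs nonnegatively with every $\vb\in C_n$. Since $\C$ stabilizes with index $r$, we have $C_n=\cncl(\Sym(n)(C_r))$, so it suffices to test against $\sigma(\wb)$ for $\wb\in C_r$ and $\sigma\in\Sym(n)$; moving $\sigma$ to the other side, this reduces to testing the $\Sym(n)$-translates of $(\ub,u_r,\dots,u_r)$ against $C_r\subseteq\RR^r$. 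Because $u_r=\max_i u_i$ (from sorting) and because $\wb\in\RR^r_{\ge0}$, any such pairing $\sum_i w_i u_{\tau(i)}$ over a permuted index set is at least $\sum_i w_i\cdot(\text{the }r\text{ smallest of the }u\text{'s in those slots})$; a rearrangement-type estimate together with $(\ub)\in C_r^*$ should yield nonnegativity — but one has to be attentive that a permutation in $\Sym(n)$ can scatter the coordinates $u_1,\dots,u_r$ among positions $>r$ where $\wb$ has zero entries, which only helps. This is the step I expect to be the main obstacle: making the rearrangement/monotonicity argument precise so that every $\Sym(n)$-translate of $F_n$ indeed lands in $C_n^*$, and in particular justifying why repeating $u_r$ (rather than, say, $0$) is the correct extension.

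For the reverse inclusion ``$\subseteq$'', take $\ub\in C_n^*\subseteq\RR^n$. Restricting test vectors to $\vb\in C_r\subseteq\RR^r$ shows the truncation $(u_1,\dots,u_r)$ lies in $C_r^*$ (using $C_r\subseteq C_n$). More generally, for each $\sigma\in\Sym(n)$ and $\vb\in C_r$ we have $\sigma(\vb)\in C_n$, so $\langle\ub,\sigma(\vb)\rangle\ge0$, which says that every ``$r$-subset projection'' of $\ub$ (reading off the coordinates of $\ub$ in $r$ chosen positions, in the order dictated by $\sigma$) lies in $C_r^*$. Since $C_r^*$ is generated by $\Sym(r)(F_r)$, each such projection is a conical combination of sorted generators; I would then reassemble $\ub$ from these projections. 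The guiding principle: sort the coordinates of $\ub$ as $u_{i_1}\le\cdots\le u_{i_n}$, peel off $\eb$-multiples to handle the largest coordinates (those are covered by the $\Sym(n)(\eb_n)$ term), and express the remaining ``sorted core'' of length $r$ via $\Sym(n)(F_n)$ — the repeated-$u_r$ padding in $F_n$ being exactly what lets a length-$r$ pattern sit inside $\RR^n$ while remaining in the dual. Finally, combining both inclusions with the explicit description of $F_n$ gives \eqref{e:dual-cone}, and the claim that $F_r$ ``completely determines'' all $C_n^*$ follows since $F_n$ is a purely combinatorial function of $F_r$. I would close by noting that finiteness of $F_r$ is automatic from finite generation of $C_r^*$, and that the case $n=r$ of \eqref{e:dual-cone} is just the definition of $F_r$ together with $\{\eb_1,\dots,\eb_r\}\subseteq C_r^*$.
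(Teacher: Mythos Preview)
Your overall plan---sort, extend by repeating the last coordinate, and verify both inclusions---is exactly the shape of the paper's argument. But your choice of $F_r$ is wrong, and this is a genuine gap, not just imprecision. Taking $F_r$ to be sorted orbit representatives of a generating set of $C_r^*$ does \emph{not} suffice. Concretely, let $r=3$, $C_3=\cncl(\Sym(3)(1,1,4))$, and $C_n=\cncl(\Sym(n)(C_3))$ for $n\ge3$. Then $C_3^*=\cncl(\Sym(3)(-1,-1,5))$, so your $F_3=\{(-1,-1,5)\}$ and your $F_4=\{(-1,-1,5,5)\}$. However $(-1,2,2,2)\in C_4^*$ (check against all $\Sym(4)$-translates of $(1,1,4,0)$), yet a short computation with the symmetric ansatz shows $(-1,2,2,2)\notin\cncl(\Sym(4)(\{(-1,-1,5,5)\}\cup\{\eb_4\}))$: the two resulting linear conditions force a coefficient to be negative. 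So \eqref{e:dual-cone} fails at $n=4$ for your $F_r$.

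What goes wrong in your ``$\subseteq$'' step is this: for sorted $\vb\in C_n^*$ you correctly note $\vb'=(v_1,\dots,v_r)\in C_r^*$, and you want to write $(\vb',v_r,\dots,v_r)$ using $F_n$. The extension map $\ub\mapsto(\ub,u_r,\dots,u_r)$ is \emph{linear}, so a decomposition of $\vb'$ lifts---but only if the summands are themselves sorted (so that ``repeat the last coordinate'' means ``repeat the max'' for each summand simultaneously). Your $F_r$ only lets you write $\vb'=\sum\lambda_i\sigma_i(\wb_i)$ with possibly non-trivial $\sigma_i\in\Sym(r)$, and then the extensions of the $\sigma_i(\wb_i)$ do not assemble to the extension of $\vb'$. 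The paper fixes this by taking $F_r$ to be a finite generating set (together with~$\eb_r$) of the \emph{sorted subcone} $D_r^*=\{\ub\in C_r^*: u_1\le\cdots\le u_r\}$; finiteness of such an $F_r$ is a separate (easy) consequence of Minkowski--Weyl, since $D_r^*$ is $C_r^*$ intersected with the half-spaces $u_i\le u_{i+1}$. In the example above $D_3^*$ has extreme rays $(-1,-1,5)$, $(-1,2,2)$, $(1,1,1)$, and the missing generator $(-1,2,2)$ is exactly what produces $(-1,2,2,2)\in F_4$. Your ``$\supseteq$'' direction is closer to correct in spirit; the paper makes the rearrangement precise by writing, for each coordinate dropped, $\ub'=\ub+\sum_{j}(u_j-u_{j-1})\eb_{j-1}\in C_r^*$ using $u_j-u_{j-1}\ge0$ and $\eb_j\in C_r^*$.
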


The proof of this result requires some preparation.

\begin{lemma}
	\label{l:dual-chain}
	Let $\C=(C_{n})_{n\geq 1}$ be a stabilizing non-trivial $\Sym$-invariant chain of cones with $C_{n} \subseteq \RR^{n}_{\geq 0}$ for all $n\ge 1$. Suppose $r=\ind(\C)$ and let $G_r$ be the subset of $C_r^*$ consisting of all elements $\ub=(u_1,\dots,u_r)\in\RR^{r}$ with $u_1\le\cdots\le u_r$ and $|\supp(\ub)|\ge 2.$
	%\[
%	u_1\le\cdots\le u_r\quad
%	\text{ and }\quad |\supp(\ub)|\ge 2.
%	\]
	Set
	\[
	G_{n}
		=\{(\ub,u_{r},\dots,u_r)\in \RR^{n}\colon \ub=(u_1,\dots,u_r)\in G_r\}
	\quad \text{for }\ n\ge r+1.
	\]
	Then
	\begin{equation}
	\label{e:dual-cone-2}
	 C_n^*=\cncl(\Sym(n)(G_n\cup\{\eb_n\}))
	\quad \text{for all }\ n\ge r.
	\end{equation}
\end{lemma}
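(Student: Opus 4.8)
Since $r=\ind(\C)$, we have $C_n=\cncl(\Sym(n)(C_r))$ for every $n\ge r$, and the plan is to dualize this identity. Dualizing the conical hull of a $\Sym(n)$-orbit, and using that $C_r^*$ is $\Sym(r)$-invariant by Lemma~\ref{l:dual-cone}(ii) (so that the ordering of the chosen coordinates is irrelevant), one arrives at the criterion
\[
 \vb\in C_n^*
 \quad\Longleftrightarrow\quad
 (v_{j_1},\dots,v_{j_r})\in C_r^*
 \ \text{ for every $r$-element subset }\{j_1,\dots,j_r\}\subseteq[n].
\]
Two further facts should be recorded at the start: $\RR^n_{\ge0}\subseteq C_n^*$ since $C_n\subseteq\RR^n_{\ge0}$, and $\RR^n_{\ge0}=\cncl(\Sym(n)(\eb_n))$; and $C_n^*$ is $\Sym(n)$-invariant, again by Lemma~\ref{l:dual-cone}(ii). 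One may also assume the chain is not identically zero, so $C_r\ne\{0\}$; note that, by $\Sym(r)$-invariance of $C_r$, the vanishing of a single coordinate on all of $C_r$ already forces $C_r=\{0\}$, and this is the only pathology that must be ruled out below.

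\textbf{The inclusion $\supseteq$ in \eqref{e:dual-cone-2}.} Since $C_n^*$ is a $\Sym(n)$-invariant cone, it suffices to check $G_n\cup\{\eb_n\}\subseteq C_n^*$. Clearly $\eb_n\in\RR^n_{\ge0}\subseteq C_n^*$. Let $\bb=(\ub,u_r,\dots,u_r)\in G_n$ with $\ub=(u_1,\dots,u_r)\in G_r$; I would check $\bb\in C_n^*$ via the criterion. The coordinates of $\bb$ are $u_1,\dots,u_{r-1}$ together with copies of $u_r$, so for each $j\le r$ at most $j-1$ of them are strictly smaller than $u_j$; hence any $r$ coordinates of $\bb$, listed in increasing order as $w_1\le\cdots\le w_r$, satisfy $w_j\ge u_j$ for all $j$. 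Since $\ub\in C_r^*$, $C_r\subseteq\RR^r_{\ge0}$ and $(w_1,\dots,w_r)-\ub\ge0$, this yields $(w_1,\dots,w_r)\in C_r^*$. Thus every selection of $r$ coordinates of $\bb$ lies in $C_r^*$, so $\bb\in C_n^*$.

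\textbf{The inclusion $\subseteq$.} Let $\vb\in C_n^*$. Both $C_n^*$ and the cone on the right of \eqref{e:dual-cone-2} are $\Sym(n)$-invariant, so after permuting coordinates we may assume $v_1\le\cdots\le v_n$. If $v_1\ge0$, then $\vb\in\RR^n_{\ge0}=\cncl(\Sym(n)(\eb_n))$ and we are done. Suppose $v_1<0$. By the criterion $(v_1,\dots,v_r)\in C_r^*$, and it is sorted. Were its support at most $1$, it would equal $v_1\eb_1$; but then every vector of $C_r$ would have zero first coordinate (because $v_1\eb_1\in C_r^*$, $v_1<0$, and $C_r\subseteq\RR^r_{\ge0}$), hence $C_r=\{0\}$ by $\Sym(r)$-invariance, which is excluded. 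So $(v_1,\dots,v_r)\in G_r$, and therefore $\bb:=(v_1,\dots,v_{r-1},v_r,\dots,v_r)\in G_n$. Finally $\vb-\bb=(0,\dots,0,v_{r+1}-v_r,\dots,v_n-v_r)$ is nonnegative because $\vb$ is sorted, so $\vb=\bb+(\vb-\bb)\in\cncl(\Sym(n)(G_n\cup\{\eb_n\}))$. (For $n=r$ this reduces to $\vb=\vb$ with $\vb\in G_r$.)

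\textbf{Main obstacle.} The real work is twofold: deriving the coordinate-selection criterion for $C_n^*$ from the stabilization identity, and the sign-and-multiplicity bookkeeping it feeds into --- the coordinatewise domination $w_j\ge u_j$ has to be handled with care because of the repeated final coordinate in elements of $G_n$, and the degenerate sign configurations (a coordinate identically zero on $C_r$) must be isolated precisely, so that a sorted element of $C_r^*$ with a negative entry is guaranteed to have support at least $2$. Once the criterion is in place, the decomposition $\vb=\bb+(\vb-\bb)$ is the natural splitting and the rest is routine.
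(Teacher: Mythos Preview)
Your proof is correct. The inclusion ``$\subseteq$'' runs exactly as in the paper: sort $\vb$, observe that $(v_1,\dots,v_r)\in C_r^*$, rule out the degenerate support-one configuration, and split $\vb=\bb+(\vb-\bb)$ with $\bb\in G_n$ and $\vb-\bb\in\RR^n_{\ge0}$.

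For the inclusion ``$\supseteq$'' you take a genuinely different route. The paper argues by induction on $n$: to show $G_{r+1}\subseteq C_{r+1}^*$ it pairs $\vb=(\ub,u_r)\in G_{r+1}$ against $\pi(\wb)$ with $\wb\in C_r$, observes that some coordinate $x_i$ of $\pi(\wb)$ vanishes, and rewrites $\langle\vb,\pi(\wb)\rangle=\langle\ub',\sigma(\wb)\rangle$, where $\ub'$ is obtained from $\ub$ by deleting the $i$-th entry and appending another copy of $u_r$; the key identity $\ub'=\ub+\sum_{j>i}(u_j-u_{j-1})\eb_{j-1}$ shows $\ub'\in C_r^*$. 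You instead first isolate the criterion
\[
\vb\in C_n^*\ \Longleftrightarrow\ (v_{j_1},\dots,v_{j_r})\in C_r^*\ \text{for every $r$-subset }\{j_1,\dots,j_r\}\subseteq[n],
\]
and then verify it for $\bb=(\ub,u_r,\dots,u_r)\in G_n$ via the majorization $w_j\ge u_j$ for any sorted selection of $r$ coordinates. Your argument treats all $n\ge r$ at once with no induction and makes the role of the monotonicity hypothesis on $G_r$ transparent; the paper's argument is more hands-on but never needs to formulate the selection criterion. Both rest on the same underlying fact $C_r^*+\RR^r_{\ge0}\subseteq C_r^*$. Your explicit exclusion of $C_r=\{0\}$ is also tacitly present in the paper's proof, at the step where it asserts that a sorted $\vb'\in C_r^*$ of support one cannot have its nonzero entry negative.
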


\begin{proof}
	We first prove the equality for $n=r$. Note that $C_r\ne \{\nub\}$ because $\C$ is non-trivial. Since $C_r\subseteq\RR_{\ge0}^r$, it is evident that $\eb_r\in C_r^*$. So Lemma~\ref{l:dual-cone} yields the inclusion ``$\supseteq$''.
	For the reverse one, let $\vb\in C_r^*$ with $\vb\ne \nub$. If $|\supp(\vb)|=1$, then $\vb=\lambda\sigma(\eb_r)$ for some $\lambda>0$ and $\sigma\in\Sym(r)$. Otherwise, $|\supp(\vb)|\ge 2$ and thus $\vb=\pi(\ub)$ for some $\ub\in G_r$ and $\pi\in\Sym(r)$. This establishes the reverse inclusion.

Next we consider the case $n\ge r+1$. For the inclusion ``$\supseteq$'', using induction on $n$ we may assume that $n=r+1.$ By Lemma~\ref{l:dual-cone}, it suffices to show that $G_{r+1}\subseteq C_{r+1}^*.$ 

Let $\vb=(\ub,u_r)\in G_{r+1}$ for some $\ub=(u_1,\dots,u_r)\in G_r.$ Since
	$C_{r+1}=\cncl(\Sym(r+1)(C_r))$, we only need to check that
	\[
	\langle \vb,\pi(\wb)\rangle\ge0
	\quad\text{for every }\ \wb\in C_r\ \text{ and }\ \pi\in\Sym(r+1).
	\]
	Write $\pi(\wb)=(x_1,\dots,x_{r+1})$. By definition of the action of $\pi$, there exists $i\in[r+1]$ with $x_i=0$. Moreover, we can choose
	$\sigma\in\Sym(r)$ such that
	\[
	(x_1,\dots,x_{i-1},x_{i+1},\dots,x_{r+1})=\sigma(\wb)\in C_r.
	\]
	Now let
	\[
	\ub'=(u_1,\dots,u_{i-1},u_{i+1},\dots,u_r, u_r)
	=\ub+\sum_{j=i+1}^{r}(u_{j}-u_{j-1})\eb_{j-1}.
	\]
	Then
	$
	\ub'\in C_r^*,
	$
	since $\ub\in G_r\subseteq C_r^*$ and $\eb_{j}\in  C_r^*$ for all $j\in[r].$
	It follows that
	\[
	\langle \vb,\pi(\wb)\rangle=\sum_{j=1}^ru_ix_{i}+u_rx_{r+1}
	= \langle \ub',\sigma(\wb)\rangle\ge0.
	\]
	
	It remains to prove the inclusion ``$\subseteq$'' for all $n\ge r+1.$ Let $\vb=(v_1,\dots,v_n)\in C_n^*$. Since both sides of \eqref{e:dual-cone-2} are  $\Sym(n)$-invariant, we may assume that
	$v_1\le\cdots\le v_n.$ Let
	$
	\vb'=(v_{1},\dots,v_r).
	$
	We show that $\vb'\in  C_r^*.$ Indeed, recall that each $\wb'\in C_r$ is identified with
	\[
	\wb=(\wb',0,\dots,0)\in C_n.
	\]
	Hence,
	\[
	\langle \vb',\wb'\rangle=\langle \vb,\wb\rangle\ge 0
	\]
	and so $\vb'\in C_r^*.$ If $|\supp(\vb')|\le 1$, then we must have either
	$
	0= v_{1}=\cdots=v_{r-1}\le v_r
	$
	or
	$
	0=v_{r}=\cdots=v_{2}>v_{1}.
	$
	The latter cannot happen since $\langle \vb',\wb'\rangle\ge0$ for all $\wb'\in C_r\subseteq\RR^r_{\ge0}$.
	Thus, the former holds. It follows that $\vb\in \RR^{n}_{\geq 0}$ and so
	\[
	\vb\in \cncl(\Sym(n)(\eb_n))
	\subseteq \cncl(\Sym(n)(G_n\cup\{\eb_n\})).
	\]
	If $|\supp(\vb')|\ge 2$, then $\vb'\in G_r$ by definition of $G_r.$ Hence, for $\ub=(\vb',v_{r},\dots,v_{r})\in \RR^{n}$ we get $u\in G_n$. This implies
	\[
	\vb=\ub+\sum_{j=r+1}^{n}(v_{j}-v_{r})\eb_j
	\in\cncl(\Sym(n)(G_n\cup\{\eb_n\})),
	\]
	concluding the proof of the inclusion ``$\subseteq$''.
\end{proof}

\begin{lemma}
	\label{l:decreasing-subcone}
	If $C_n\subseteq \RR^n$ is a finitely generated cone, then its subcone of elements with non-decreasing coordinates
	\[
	D_n=\{(u_1,\dots,u_n)\in C_n\colon u_1\le\cdots\le u_n\}
	\]
	is also finitely generated.
\end{lemma}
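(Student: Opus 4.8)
The plan is to realize $D_n$ as a finite intersection of linear halfspaces and then invoke the Minkowski--Weyl theorem (in both directions). First I would note that the set of vectors with non-decreasing coordinates,
\[
 \O_n \defas \{(u_1,\dots,u_n)\in\RR^n\mid u_1\le\cdots\le u_n\}
 =\bigcap_{i=1}^{n-1}\{\ub\in\RR^n\mid \langle\eb_{i+1}-\eb_i,\ub\rangle\ge 0\},
\]
is itself a polyhedral cone, being the intersection of the $n-1$ halfspaces $u_{i+1}-u_i\ge 0$.

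Next, since $C_n\subseteq\RR^n$ is finitely generated, the Minkowski--Weyl theorem (\cite[Theorem 1.15]{BG}) provides a representation $C_n=\bigcap_{j=1}^{m}\{\ub\in\RR^n\mid \langle\vb_j,\ub\rangle\ge 0\}$ for some finitely many $\vb_1,\dots,\vb_m\in\RR^n$. Then $D_n=C_n\cap\O_n$ is the intersection of these $m+(n-1)$ halfspaces, hence again a polyhedral cone. Applying the converse direction of the Minkowski--Weyl theorem to this halfspace description, $D_n$ is finitely generated, which is the assertion.

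I do not expect a real obstacle here: the only points to be careful about are to quote both implications of the Minkowski--Weyl theorem (finitely generated $\Longleftrightarrow$ intersection of finitely many halfspaces) and the elementary fact that a finite intersection of halfspaces is again of that form. Note that this argument only yields existence of a finite generating set; if one wanted a generating set of $D_n$ expressed explicitly in terms of a generating set of $C_n$ (as is implicitly needed to make Lemma~\ref{l:dual-chain} effective), one would instead have to run a Fourier--Motzkin type elimination or directly analyze how the order constraints truncate the extreme rays of $C_n$, but for the finite generation claimed in the statement the halfspace argument suffices.
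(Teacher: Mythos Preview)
Your proof is correct and is essentially the paper's own argument: the paper simply writes $D_n=C_n\cap\bigcap_{i=1}^{n-1}H_i^{+}$ with $H_i^{+}=\{\ub\in\RR^n\mid u_i\le u_{i+1}\}$ and then invokes the classical Minkowski--Weyl theorem. Your version is slightly more explicit in spelling out both directions of Minkowski--Weyl (first converting $C_n$ to a halfspace description, then converting the intersection back to a finitely generated cone), but the approach is identical.
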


\begin{proof}
	$D_n$ is the intersection of $C_n$ with the following linear halfspaces
	\[
	H_i^{+}=\{(x_1,\dots,x_n)\in\RR^n\colon x_i\le x_{i+1}\},
	\ \ i=1,\dots, n-1.
	\]
	Therefore, $D_n$ is finitely generated by the classical Minkowski-Weyl theorem.
\end{proof}

Now we are ready to prove Theorem~\ref{t:Minkowski-Weyl}.

\begin{proof}[Proof of Theorem~\ref{t:Minkowski-Weyl}]
	For $n\ge r$ let $G_n$ be the subset of $C_n^*$ as in Lemma~\ref{l:dual-chain}. Let $D_r^*$ be the subcone of $C_r^*$ consisting of elements with non-decreasing coordinates. Notice that $G_r$ is a subset of $D_r^*$ that consists of elements with support size at least 2. Since $\eb_r\in D_r^*$, it is easy to see that
	\[
	D_r^*=\cncl(G_r\cup\{\eb_r\})=G_r\cup\cncl(\eb_r).
	\]
	By Lemma~\ref{l:decreasing-subcone}, $D_r^*$ is finitely generated. Hence, there exists a finite subset $F_r\subset G_r$ such that
	\begin{equation}
		\label{e:subcone}
		D_r^*=\cncl(F_r\cup\{\eb_r\}).
	\end{equation}
	For $n\ge r+1$ let $F_n$ be defined as in \eqref{e:generating-set}. Of course every element of $F_r$ has non-decreasing coordinates. So it remains to prove \eqref{e:dual-cone}. Using Lemma~\ref{l:dual-chain} it suffices to verify that
	\[
	G_n\subseteq \cncl(F_n\cup\{\eb_n\})
	\quad \text{for all }\ n\ge r.
	\]
	By \eqref{e:subcone}, this is true for $n=r$. Now suppose that $n\ge r+1$ and let $\vb=(\ub,u_{r},\dots,u_r)\in G_n$ with $\ub=(u_1,\dots,u_r)\in G_r.$ Since $G_r\subseteq \cncl(F_r\cup\{\eb_r\})$, we can find $\wb_1,\dots,\wb_m\in F_r$ and $\lambda,\lambda_1,\dots,\lambda_m\ge0$ such that
	\[
	\ub=\sum_{i=1}^m\lambda_i\wb_i+\lambda \eb_r.
	\]
	Write $\wb_i=(w_{i1},\dots,w_{ir})$ and set $\vb_i=(\wb_i,w_{ir},\dots,w_{ir})\in \RR^n$ for $i=1,\dots, m$. Then $\vb_i\in F_n$ and one has
	\[
	\vb=\sum_{i=1}^m\lambda_i\vb_i+\lambda\sum_{j=r}^{n}\eb_j
	\in\cncl(F_n\cup\{\eb_n\}).
	\]
	Hence, $G_n\subseteq \cncl(F_n\cup\{\eb_n\})$, as desired.
\end{proof}

We illustrate Theorem~\ref{t:Minkowski-Weyl} by the following simple example that was also obtained in \cite[Proposition 2.2.4]{An} by a more direct approach.

\begin{example}
	\label{e:1-generator}
	Let $\ub=(1,a)\in\RR^2$ with $a\ge 1$. Consider the $\Sym$-invariant chain of cones $\C=(C_{n})_{n\geq 1}$ with $C_1=0$ and $C_n=\cncl(\Sym(n)(\ub))$ for all $n\ge2.$
	It is evident that $C_2^*=\cncl(\Sym(2)((-1,a)))$, and the subcone $D_2^*\subseteq C_2^*$ of elements with non-decreasing coordinates is generated by $F_2=\{(-1,a),(1,1)\}$. So by Theorem~\ref{t:Minkowski-Weyl},
	\[
	C_n^*=\cncl(\Sym(n)(F_n\cup\{\eb_n\}))
	\ \text{ with }\
	F_n=\{(-1,a,\dots,a),(1,\dots,1)\}
	\ \text{ for }\ n\ge 3.
	\]
	Since $(1,\dots,1)\in \cncl(\Sym(n)(\eb_n))$, this generator is superfluous and one gets the following simpler representation
	\[
	C_n^*=\cncl(\Sym(n)(F_n'\cup\{\eb_n\}))
	\ \text{ with }\
	F_n'=\{(-1,a,\dots,a)\}
	\ \text{ for }\ n\ge 3.
	\]
\end{example}

It would be of interest to extend Theorem~\ref{t:Minkowski-Weyl} to $\Sym$-invariant chains of cones without the nonnegativity assumption.

\begin{problem}
 Let $\C=(C_{n})_{n\geq 1}$ be a stabilizing $\Sym$-invariant chain of finitely generated cones $C_{n} \subseteq \RR^{n}$. Describe the chain $\C^*=(C_{n}^*)_{n\geq 1}$ of dual cones.
\end{problem}

In the remainder of this section we discuss the dual of a $\Sym$-invariant cone $C\subseteq\RR_{\ge0}^{\infty}$. Consider the saturated chain of local cones $\overline{\C}=(\overline{C}_{n})_{n\geq 1}$ of $C$. When $C$ is $\Sym$-equivariantly finitely generated, Theorem~\ref{t:Minkowski-Weyl} provides a satisfying description of the chain $\overline{\C}^*=(\overline{C}_{n}^*)_{n\geq 1}$ of dual cones. With notation of this result one might expect that the global dual cone $C^*\subseteq\RR^\NN$ is $\Sym$-equivariantly generated by the finite set $F\cup\{\eb_1\}$, where
\[
 F=\{(\ub,u_{r},u_r,\dots)\in \RR^{\NN}\colon \ub=(u_1,\dots,u_r)\in F_r\}.
\]
However, somewhat surprisingly, this is far from the truth. In fact, the proof of the next result shows that $C^*$ is much bigger than any subcone that is $\Sym$-equivariantly finitely generated. Notice that this result does not require the assumption that $C$ is $\Sym$-equivariantly finitely generated.

\begin{theorem}
\label{thm:global-dual}
	Let $C\subseteq\RR_{\ge0}^{\infty}$ be a $\Sym$-invariant cone. Then the dual cone $C^*\subseteq\RR^\NN$ is not
	$\Sym$-equivariantly finitely generated.
\end{theorem}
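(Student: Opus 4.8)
The plan is to show that $C^{*}$ is forced to contain the very large cone $\RR^{\NN}_{\ge 0}$, and that this cone alone cannot be $\Sym$-equivariantly finitely generated. First I would record the lower bound $\RR^{\NN}_{\ge 0}\subseteq C^{*}$: for $\ub\in\RR^{\NN}_{\ge 0}$ and $\vb\in C$, the hypothesis $C\subseteq\RR^{\infty}_{\ge 0}$ makes $\langle\ub,\vb\rangle=\sum_{i\ge 1}u_{i}v_{i}$ a \emph{finite} sum of nonnegative terms, so $\ub\in C^{*}$. Hence it suffices to prove that $\RR^{\NN}_{\ge 0}$ is \emph{not} contained in $\cncl(\Sym(B))$ for any finite set $B=\{\bb_{1},\dots,\bb_{k}\}\subset\RR^{\NN}$ (we may assume $B\neq\emptyset$); this is exactly the sharper statement announced before the theorem, and it also shows that every $\Sym$-equivariantly finitely generated subcone of $C^{*}$ is proper. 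Allowing the generators to range over all of $\RR^{\NN}$ rather than only $\RR^{\infty}$ is the strongest version; if one only considered $B\subset\RR^{\infty}$ the claim would be immediate, since a finite conical combination of finitely supported vectors is again finitely supported.

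The key point is an observation about the ``tails'' of elements of $\cncl(\Sym(B))$. If $\vb=\sum_{j=1}^{m}\lambda_{j}\sigma_{j}(\bb_{\ell_{j}})$ with $\lambda_{j}>0$, $\sigma_{j}\in\Sym$ and $\ell_{j}\in[k]$, then each $\sigma_{j}$ fixes all but finitely many indices, so there is an $N\in\NN$ with $\bigl(\sigma_{j}(\bb_{\ell_{j}})\bigr)_{i}=(\bb_{\ell_{j}})_{i}$ for all $i>N$ and all $j$. Consequently
\[
 v_{i}=\sum_{\ell=1}^{k}c_{\ell}\,(\bb_{\ell})_{i}\quad\text{for all }i>N,\qquad\text{where }\ c_{\ell}=\sum_{j:\,\ell_{j}=\ell}\lambda_{j}\ge 0.
\]
In words: beyond a fixed index $\vb$ agrees with a \emph{single} nonnegative combination of the fixed vectors $\bb_{1},\dots,\bb_{k}$, encoded by $\boldsymbol{c}=(c_{1},\dots,c_{k})\in\RR^{k}_{\ge 0}$. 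So membership in $\cncl(\Sym(B))$ confines the tail of a vector to a $k$-parameter family.

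To conclude, I would fix pairwise disjoint \emph{infinite} subsets $S_{1},\dots,S_{k+1}$ of $\NN$ and let $\wb^{(t)}\in\RR^{\NN}_{\ge 0}$ be the indicator vector of $S_{t}$. Suppose for contradiction that $\wb^{(t)}\in\cncl(\Sym(B))$ for every $t$. Applying the tail observation to each of these $k+1$ vectors, with $N$ taken large enough to serve all of them at once, we obtain $\boldsymbol{c}^{(1)},\dots,\boldsymbol{c}^{(k+1)}\in\RR^{k}_{\ge 0}$ with $(\wb^{(t)})_{i}=\sum_{\ell=1}^{k}c^{(t)}_{\ell}(\bb_{\ell})_{i}$ for all $i>N$. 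Being $k+1$ vectors in $\RR^{k}$, the $\boldsymbol{c}^{(t)}$ satisfy a nontrivial relation $\sum_{t=1}^{k+1}\mu_{t}\boldsymbol{c}^{(t)}=\nub$. Then $\boldsymbol{y}:=\sum_{t=1}^{k+1}\mu_{t}\wb^{(t)}$ has $y_{i}=0$ for all $i>N$, hence finite support; but by disjointness of the $S_{t}$ the vector $\boldsymbol{y}$ is constantly $\mu_{t}$ on $S_{t}$, so for any $t$ with $\mu_{t}\neq 0$ it is nonzero on the infinite set $S_{t}$ --- a contradiction. Thus some $\wb^{(t)}$ lies outside $\cncl(\Sym(B))$, and $C^{*}$ is not $\Sym$-equivariantly finitely generated.

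I do not expect a genuine obstacle: the content is the tail observation, which works precisely because finite permutations move only finitely many coordinates. The single delicate point is that the cutoff $N$ in the last step depends on the chosen conical representations of the finitely many vectors $\wb^{(t)}$, so one must take the maximum over the finitely many permutations that occur. Equivalently, and perhaps more conceptually, one can run the whole argument in the quotient space $\RR^{\NN}/\RR^{\infty}$: the quotient map sends $\cncl(\Sym(B))$ onto the cone generated by the (at most $k$) images of $\bb_{1},\dots,\bb_{k}$, hence into a subspace of dimension $\le k$, whereas the images of indicators of pairwise disjoint infinite sets are linearly independent there, since a nontrivial finite linear combination of such indicators never has finite support.
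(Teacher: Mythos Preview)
Your argument is correct, but it takes a different route from the paper's. Both proofs start from the same lower bound $\RR^{\NN}_{\ge 0}\subseteq C^{*}$. The paper then finishes in one stroke by a cardinality count: $\Span(\Sym(A))$ has at most countable dimension because $\Sym$ is countable and $A$ is finite, whereas $\Span(C^{*})=\RR^{\NN}$ has uncountable dimension, so no $\Sym$-equivariantly finitely generated cone can equal $C^{*}$. Your approach instead exploits the fact that $\Sym$ acts trivially on the quotient $\RR^{\NN}/\RR^{\infty}$ (your ``tail observation''), so the image of $\cncl(\Sym(B))$ there lies in a subspace of dimension at most $|B|$; you then exhibit explicitly more than $|B|$ independent classes via indicators of disjoint infinite sets. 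Your argument is more hands-on and actually produces a concrete element of $C^{*}$ outside any given $\cncl(\Sym(B))$, whereas the paper's proof is shorter and immediately yields the stronger statement (recorded in the paper as a remark) that $C^{*}$ is not even $\Sym$-equivariantly \emph{countably} generated; your explicit $k{+}1$-sets argument does not extend to countable $B$ as written, though your quotient formulation does once one knows that $\RR^{\NN}/\RR^{\infty}$ has uncountable dimension.
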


\begin{proof}
    Let $D$ be an arbitrary subcone of $C^*$ that is $\Sym$-equivariantly  generated by a finite set $A$. Let $V=\Span(D)$ be the linear subspace of $\RR^\NN$ generated by $D$. Then $V=\Span(\Sym(A))$. Since $\Sym$ is countable and $A$ is finite, $\dim V$ is at most countable. On the other hand, one has $\RR_{\ge0}^\NN\subseteq C^*$ since $C\subseteq\RR_{\ge0}^{\infty}$. Thus, $\Span(C^*)=\RR^\NN$ has uncountable dimension. It follows that $\Span(D)\subsetneq \Span(C^*)$, and hence $D\subsetneq C^*.$
\end{proof}

\begin{remark}
 The proof of Theorem~\ref{thm:global-dual} is still valid if $A$ is countable. Therefore, under the assumption of this result, the dual cone $C^*$ is even not $\Sym$-equivariantly \emph{countably} generated.
\end{remark}

To conclude this section, we provide a candidate for the global object of a chain of dual cones.

\begin{remark}
\label{r:dual-cone}
 We keep the assumption of Theorem~\ref{t:Minkowski-Weyl}. Let $C\subseteq\RR_{\ge0}^{\infty}$ denote the limit cone of $\C$ and $C^*\subseteq\RR^\NN$ its dual cone. As we have seen in Theorem~\ref{thm:global-dual}, $C^*$ is too big and does not reflect well the properties of the chain $\C^*=(C_{n}^*)_{n\geq 1}$ of dual cones. So one might ask, what could be the ``global'' cone of $\C^*$? The first natural candidate is of course the union $\bigcup_{n\ge1}C_{n}^*$. But this cone is not $\Sym$-invariant, and therefore, not really interesting. Another natural candidate is the inverse limit
 \[
  \Cf^*=\varprojlim_{n\ge1} C_{n}^*.
 \]
Here, $\C^*$ is an inverse system with the maps $\varphi_n:C_n^*\to C_{n-1}^*$, in which $\varphi(\ub)\in C_{n-1}^*$ is obtained from $\ub\in C_n^*$ by removing one maximal coordinate of $\ub$. The cone $\Cf^*$ can be identified with a subcone of $C^*$ that satisfies the following properties:
\begin{enumerate}
 \item
 each $\ub\in\Cf^*$, viewed as a sequence of its coordinates, is a well-ordered sequence (i.e. every non-empty subsequence of $\ub$ has a least element), and
 \item
 if $\ub'$ is the subsequence of $\ub$ consisting of $r$ smallest elements of $\ub$, then $\ub'\in C_r^*$ (recall that $r=\ind(\C)$).
\end{enumerate}
Evidently, $\Cf^*$ is $\Sym$-invariant. Moreover, $\Cf^*$ is a proper subcone of $C^*$ because
$\ub=(1/n)_{n\ge1}\in C^*\setminus \Cf^*$. It would be interesting to know whether $\Cf^*$ is $\Sym$-equivariantly finitely generated and, more generally, how the properties of the chain $\C^*$ are encoded in $\Cf^*.$
\end{remark}

%------------------------------------------------------------------------------
\section{Chains of monoids and normality}
\label{sec:monoid}

As a preparation for the next section, we discuss here chains of (normal) monoids. It is shown that the results for cones in Section~\ref{section-cones}  have their monoid counterparts. In particular, the close relationship between cones and normal monoids is emphasized.

Recall that the monoid generated by $A\subseteq\ZZ^{\NN}$ is the submonoid
$$
\mndcl(A)=\Big\{\sum_{i=1}^km_i\ab_i\colon k\in\NN,\ \ab_i\in A,\
m_i\in\ZZ_{\geq 0}\Big\}\subseteq\ZZ^{\NN}.
$$
Replacing the conical operation $\cncl$ with the monoidal operation $\mndcl$, the notions defined for cones in Section~\ref{section-cones}, such as \emph{$\Sym$-equivariant finite generation, $\Sym$-invariant chains, stabilizing chains}, etc., extend without difficulty to monoids.
A monoid $M\subseteq\ZZ^{\NN}$ is \emph{positive} if the only element of $M$ with the property that $\ub\in M$ and $-\ub\in M$ is $\nub.$ This is the case, e.g., when $M\subseteq\ZZ^{\NN}_{\ge0}$. It is known that if $M\subseteq\ZZ^{n}$ (for some $n\in\NN$) is positive and finitely generated, then $M$ has a unique minimal system of generators, called the \emph{Hilbert basis} of $M$ and denoted by $\Hc(M)$ (see, e.g., \cite[Definition 2.15]{BG}). Moreover, in this case, $\Hc(M)$ consists of exactly the \emph{irreducible} elements of $M$, i.e. those elements $\ub\in M\setminus\{\nub\}$ with the property that if $\ub=\vb+\wb$ for $\vb,\wb\in M$, then either $\vb=\nub$ or $\wb=\nub$.

As a monoid counterpart of Corollary~\ref{c:globallocalcones}, the following result in fact says more: it also characterizes the stabilization of a chain of monoids through the stabilization of Hilbert bases. This new characterization, though simple, is essential for the next section. Let $\|\cdot\|$ denote the $1$-norm on $\RR^n$, i.e. $\|\ub\|=|u_1|+\cdots+|u_n|$ for any $\ub=(u_1,\dots,u_n)\in\RR^n.$

\begin{lemma}
  \label{l:globallocalmonoids}
  Let $M\subseteq\ZZ^\infty_{\ge0}$ be a $\Sym$-invariant monoid and let
  $\overline{\M}=(\overline{M}_{n})_{n\geq 1}$ with
  $\overline{M}_{n}=M\cap\ZZ^n$ for $n\ge1$ be its saturated chain of local monoids. Let $\Hc_n$ denote the Hilbert basis of $\overline{M}_n$ for $n\ge1$.
  Then the following are equivalent:
	\begin{enumerate}
		\item
		$M$ is $\Sym$-equivariantly finitely generated;
		\item
		$\overline{\M}$ stabilizes and $\overline{M}_{n}$ is finitely generated for all $n\ge1$;
		\item
		There exists an $s\in\NN$ such that $\overline{M}_{n}$ is finitely generated by elements of support size at most $s$ for all $n\ge1$;
		\item
        $\Hc_n$ is finite and
        $\Hc_n=\Sym(n)(\Hc_{m})$ for all $n\ge m\gg0$;
        \item
        $\|\Hc_n\|= \|\Hc_{m}\|<\infty$ for all $n\ge m\gg0$, where
        $
        \|\Hc_n\|=\sup\{\|\ub\|: \ub\in\Hc_n\};
        $
		\item
        $\|\Hc_n\|\le \|\Hc_{m}\|<\infty$ for all $n\ge m\gg0$.
	\end{enumerate}
\end{lemma}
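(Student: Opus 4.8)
The plan is to prove (i)$\Leftrightarrow$(ii)$\Leftrightarrow$(iii) by mimicking the proof of Corollary~\ref{c:globallocalcones}, with the conical hull $\cncl$ replaced by the monoidal hull $\mndcl$ and $\RR_{\ge0}$ by $\ZZ_{\ge0}$, and then to close the circle through (iii)$\Rightarrow$(iv)$\Rightarrow$(v)$\Rightarrow$(iii). We will freely use the facts recalled before the statement: each $\overline{M}_n=M\cap\ZZ^n$ is a $\Sym(n)$-invariant positive monoid; and a positive finitely generated monoid has a finite Hilbert basis, which coincides with its set of irreducible elements and is contained in every generating set. Note also that if $M=\mndcl(\Sym(A))$ then automatically $A\subseteq M\subseteq\ZZ^\infty_{\ge0}$.

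For (i)$\Rightarrow$(iii), write $M=\mndcl(\Sym(A))$ with $A=\{\ab_1,\dots,\ab_k\}$; after permuting we may assume $\ab_i\in\ZZ^{s_i}$ with $s_i=|\supp(\ab_i)|$, and we set $s=\max_i s_i$ and $A_n=A\cap\ZZ^n$. The width/support bookkeeping in the proof of Corollary~\ref{c:globallocalcones} carries over verbatim and yields $\overline{M}_n=\mndcl(\Sym(n)(A_n))$ for all $n$, so (iii) holds with this $s$. For (iii)$\Rightarrow$(ii): since $\Sym(n)$ is finite, any support-$\le s$ generating set of $\overline{M}_n$ is finite, so each $\overline{M}_n$ is finitely generated; and moving a support-$\le s$ generator of $\overline{M}_n$ into $\ZZ^m$ by a permutation from $\Sym(n)$ (the same trick once more) gives $\overline{M}_n=\mndcl(\Sym(n)(\overline{M}_m))$ whenever $n\ge m\ge s$, so $\overline{\M}$ stabilizes. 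For (ii)$\Rightarrow$(i): if $\overline{\M}$ stabilizes at $r$ and $\overline{M}_r=\mndcl(\Hc_r)$ with $\Hc_r$ finite, then $M=\bigcup_{n\ge r}\overline{M}_n=\bigcup_{n\ge r}\mndcl(\Sym(n)(\overline{M}_r))=\mndcl(\Sym(\Hc_r))$.

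For (iii)$\Rightarrow$(iv): by (iii) each $\overline{M}_n$ is finitely generated and positive, hence has a finite Hilbert basis $\Hc_n$, and since $\Hc_n$ sits inside the given support-$\le s$ generating set, every element of $\Hc_n$ has support size $\le s$. The crux is $\Hc_n=\Sym(n)(\Hc_m)$ for all $n\ge m\ge s$. For ``$\supseteq$'': if $\wb\in\Hc_m$ and $\sigma\in\Sym(n)$, then $\sigma(\wb)\in M\cap\ZZ^n=\overline{M}_n$, and in any factorization $\sigma(\wb)=\ab+\bb$ with $\ab,\bb\in\overline{M}_n$, nonnegativity forces $\supp(\ab),\supp(\bb)\subseteq\supp(\sigma(\wb))=\sigma(\supp(\wb))$, so $\sigma^{-1}(\ab),\sigma^{-1}(\bb)$ are supported on $\supp(\wb)\subseteq[m]$ and hence lie in $\overline{M}_m$; since $\wb=\sigma^{-1}(\ab)+\sigma^{-1}(\bb)$ is irreducible in $\overline{M}_m$, one of $\sigma^{-1}(\ab),\sigma^{-1}(\bb)$ — hence one of $\ab,\bb$ — vanishes, so $\sigma(\wb)\in\Hc_n$. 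For ``$\subseteq$'': given $\vb\in\Hc_n$, choose $\tau\in\Sym(n)$ with $\supp(\tau(\vb))\subseteq[s]\subseteq[m]$, possible since $|\supp(\vb)|\le s$; then $\tau(\vb)\in M\cap\ZZ^m=\overline{M}_m$, and $\tau(\vb)$ is irreducible in $\overline{M}_m$ — it is irreducible in $\overline{M}_n$, being the image of the irreducible $\vb$ under the automorphism $\tau$ of $\overline{M}_n$, and $\overline{M}_m$ is a submonoid of $\overline{M}_n$ — so $\tau(\vb)\in\Hc_m$ and $\vb\in\Sym(n)(\Hc_m)$.

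For (iv)$\Rightarrow$(v): the $1$-norm is invariant under coordinate permutations, so for $n\ge m\gg0$ one has $\|\Hc_n\|=\sup\{\|\sigma(\wb)\|:\wb\in\Hc_m,\ \sigma\in\Sym(n)\}=\|\Hc_m\|$, which is finite since $\Hc_m$ is. For (v)$\Rightarrow$(iii): fix $m_0$ with $\|\Hc_n\|\le\|\Hc_{m_0}\|<\infty$ for all $n\ge m_0$ and put $B=\|\Hc_{m_0}\|$. For a fixed $n$ there are only finitely many $\ub\in\ZZ^n_{\ge0}$ with $\|\ub\|\le B$, so $\Hc_n$ is finite (whence $\overline{M}_n$ is finitely generated) and $|\supp(\ub)|\le\|\ub\|\le B$ for every $\ub\in\Hc_n$; this settles all $n\ge m_0$. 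For $n<m_0$, $\overline{M}_n=\overline{M}_{m_0}\cap\ZZ^n$ is the face of the finitely generated monoid $\overline{M}_{m_0}$ obtained by setting the coordinates $n+1,\dots,m_0$ to zero, hence is itself finitely generated, with all its generators supported in $[n]\subseteq[m_0]$; thus $s=\max(B,m_0)$ witnesses (iii). The step I expect to be the main obstacle is the equivalence (iii)$\Leftrightarrow$(iv) — the faithful transfer of irreducibility between $\overline{M}_m$ and $\overline{M}_n$ above — which rests essentially on the nonnegativity fact that a factorization of an element can only involve coordinates in that element's support, exactly as in the cone case.
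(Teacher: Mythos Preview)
Your argument is correct and shares the paper's overall architecture, but two of the implications are handled differently.

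For the passage to (iv), the paper goes from (ii) rather than from (iii): once $\overline{M}_n=\mndcl(\Sym(n)(\overline{M}_m))$ holds by stabilization, $\Sym(n)(\Hc_m)$ is automatically a generating set of $\overline{M}_n$, so the unique minimal one $\Hc_n$ must sit inside it. This gives $\Hc_n\subseteq\Sym(n)(\Hc_m)$ in one line, without having to shuttle individual irreducibles into $\ZZ^m$ via the support bound as you do. For (v)$\Rightarrow$(iii), the paper reuses the inclusion $\Sym(n)(\Hc_m)\subseteq\Hc_n$ (which both you and the paper establish for \emph{all} $n\ge m\ge1$, not just large ones) to conclude that $\|\Hc_n\|$ is non-decreasing in $n$; combined with the eventual upper bound from (v) this yields $s\defas\sup_{n\ge1}\|\Hc_n\|<\infty$ uniformly, so no separate treatment of small $n$ is needed. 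Your face argument for $n<m_0$ is valid, but the monotonicity observation is cleaner and avoids the case split.

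One small slip worth fixing: in (iii)$\Rightarrow$(ii) the sentence ``since $\Sym(n)$ is finite, any support-$\le s$ generating set of $\overline{M}_n$ is finite'' is false as written (there are infinitely many elements of $\ZZ^n_{\ge0}$ with support size at most $s$). It is also unnecessary, since (iii) already asserts finite generation explicitly; you can simply delete the clause.
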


\begin{proof}
 The equivalence of (i), (ii), and (iii) follows from \cite[Corollary 5.13]{KLR} by an argument analogous to the one used in the proof of Corollary~\ref{c:globallocalcones}.

 (ii)$\Rightarrow$(iv): Obviously, $\Hc_n$ is finite for all $n\ge1$. For the stabilization of Hilbert bases we first show that $\Sym(n)(\Hc_{m})\subseteq\Hc_n$ for all $n\ge m\ge1$. Since $\overline{M}_{n}$ is $\Sym(n)$-invariant, $\Hc_n$ is also $\Sym(n)$-invariant. So we only need to check that $\Hc_{m}\subseteq\Hc_n$. Let $\ub\in \Hc_{m}$ and suppose $\ub=\ub_1+\ub_2$ with $\ub_1,\ub_2\in \overline{M}_{n}.$ Then the nonnegativity of the $\ub_i$ yields
 \[
  \width(\ub_i)\le\width(\ub)\le m
  \quad\text{for }\ i=1,2.
 \]
Hence, $\ub_i\in\overline{M}_{n}\cap\ZZ^m=\overline{M}_{m}$ for $i=1,2$. Since $\ub\in \Hc_{m}$, we conclude that either $\ub_1=\nub$ or $\ub_2=\nub$, and therefore $\ub\in \Hc_{n}.$

Let us now prove the reverse inclusion for $n\ge m\gg0$ when $\overline{\M}$ stabilizes. Since $\Hc_m$ is a generating set of $\overline{M}_m$, it is evident that $\Sym(n)(\Hc_{m})$ is a generating set of
$\mndcl(\Sym(n)(\overline{M}_m))$. So when $\overline{M}_{n}=\mndcl(\Sym(n)(\overline{M}_m))$, we must have $\Hc_n\subseteq\Sym(n)(\Hc_{m})$.

(iv)$\Rightarrow$(v)$\Rightarrow$(vi): This is clear because $\|\ub\|=\|\sigma(\ub)\|$ for every $\ub\in\Hc_m$ and $\sigma\in\Sym(n)$.

(vi)$\Rightarrow$(iii): It is shown in the proof of the implication (ii)$\Rightarrow$(iv) that $\Sym(n)(\Hc_{m})\subseteq\Hc_n$ for all $n\ge m\ge1$. This implies $\|\Hc_m\|\le \|\Hc_{n}\|$ for all $n\ge m\ge1$. So by assumption,
\[
 s\defas\sup\{\|\Hc_n\|:n\ge1\}<\infty.
\]
From $\|\Hc_{n}\|<\infty$ it follows that $\Hc_n$ is a finite set for every $n\ge1$. Furthermore, it is apparent that $|\supp(\ub)|\le\|\ub\|\le s$ for all $\ub\in\Hc_n$ and $n\ge1$. Hence, $\overline{M}_n$ is generated by finitely many elements of support size at most $s$ for all $n\ge1$.
\end{proof}

Let us now turn to normal monoids. For a monoid $M\subseteq\ZZ^{\NN}$ the \emph{normalization} of $M$ is defined to be
\[
 \widehat{M}=\{\ub\in\ZZ^{\NN}\colon k\ub\in M\ \text{ for some }\ k\in\NN\},
\]
and $M$ is called \emph{normal} if $M=\widehat{M}.$ One of the primary motivations for studying normal monoids comes from their intimate relationship with cones. In fact, for any cone $C\subseteq\RR^{\NN}$, the intersection $M=C\cap\ZZ^{\NN}$ is a normal monoid. Conversely, if $M\subseteq\ZZ^{\NN}$ is a monoid, then $\widehat{M}=C\cap\ZZ^{\NN}$ with $C=\cncl(M)\subseteq\RR^{\NN}$, and so in particular,
\begin{equation}
\label{eq_normal}
  M=\cncl(M)\cap\ZZ^{\NN}  \
  \text{ when $M$ is normal.}
\end{equation}
Indeed, the inclusion $\widehat{M}\subseteq C\cap\ZZ^{\NN}$ is clear. For the reverse inclusion take $\ub\in C\cap\ZZ^{\NN}.$
Then $\ub=\sum_{i=1}^n\lambda_i\vb_i$ with $\lambda_i\ge0$ and $\vb_i\in M$. By (classical) Carath\'{e}odory's theorem, we can assume that the $\vb_i$ are linearly independent, and in that case, the $\lambda_i$ are uniquely determined and necessarily rational. Thus there exists $k\in\NN$ such that $k\ub=\sum_{i=1}^nm_i\vb_i$ with $m_i\in\ZZ_{\ge0}$. This means that $k\ub\in M$, and hence $\ub\in\widehat{M}.$

For $A\subseteq\ZZ^{\NN}$ the normalization of the monoid $\mndcl(A)$ is denoted by $\widehat{\mndcl}(A).$ In analogy to the cases of cones and monoids considered so far, one can also study normal monoids by using the operation $\widehat{\mndcl}$. To distinguish between $\mndcl$ and $\widehat{\mndcl}$ in analogous notions, we will add the prefix ``$\widehat{\mndcl}$'' when this operation is being used. More precisely, we say that a normal monoid $M\subseteq\ZZ^{\NN}$ is \emph{$\widehat{\mndcl}$-finitely generated} if
\[
 M=\widehat{\mndcl}(A)
 \quad\text{for a finite set }\ A\subset\ZZ^{\NN},
\]
and $M$ is \emph{$\Sym$-equivariantly $\widehat{\mndcl}$-finitely generated} if
\[
 M=\widehat{\mndcl}(\Sym(A))
 \quad\text{for a finite set }\ A\subset\ZZ^{\NN}.
\]
Next consider a $\Sym$-invariant chain of monoids ${\M}=({M}_{n})_{n\geq 1}$ with $M_n\subseteq\ZZ^n$ being normal for all $n\ge1.$ Since
$\mndcl(\Sym(n)(M_m))\subseteq M_n$ and $M_n$ is normal, it holds that
\begin{equation}
 \label{e:stabilization}
 \widehat{\mndcl}(\Sym(n)(M_m))\subseteq M_n
 \quad\text{for all }\ n\ge m\ge 1.
\end{equation}
Hence, ${\M}$ is also a $\Sym$-invariant chain with respect to the operation $\widehat{\mndcl}$. We say that ${\M}$ \emph{$\widehat{\mndcl}$-stabilizes} if the inclusion in \eqref{e:stabilization} becomes an equality for all $n\ge m\gg0$. It is natural to ask whether an $\widehat{\mndcl}$-stabilizing chain of normal monoids also stabilizes (as a chain of monoids, i.e. with respect to the operation $\mndcl$). As we will see in the next section (Corollary~\ref{c:2-stabilization}), this is indeed the case under certain assumptions.

The above properties of normal monoids are related to the corresponding properties of cones as follows.

\begin{proposition}
\label{p:cone-normalmonoid}
 Let $M\subseteq\ZZ^{\NN}$ be a normal monoid and let $C=\cncl(M)$. Then $M=\widehat{\mndcl}(A)$ for some $A\subseteq\ZZ^{\NN}$ if and only if $C=\cncl(A)$. In particular, the following hold:
 \begin{enumerate}
  \item
  $M$ is $\widehat{\mndcl}$-finitely generated if and only if $C$ is finitely generated.
  \item
  $M$ is $\Sym$-equivariantly $\widehat{\mndcl}$-finitely generated if and only if $C$ is $\Sym$-equivariantly finitely generated.
  \item
  Let ${\M}=({M}_{n})_{n\geq 1}$ be a chain of normal monoids $M_n\subseteq\ZZ^n$ and let ${\C}=({C}_{n})_{n\geq 1}$ with $C_n=\cncl(M_n)$ for $n\ge1$. Then ${\M}$ is $\Sym$-invariant if and only if ${\C}$ is $\Sym$-invariant, and when this is the case, ${\M}$ $\widehat{\mndcl}$-stabilizes if and only if ${\C}$ stabilizes.
 \end{enumerate}
\end{proposition}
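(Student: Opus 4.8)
The plan is to reduce the whole proposition to the single identity
\[
\widehat{\mndcl}(A)=\cncl(A)\cap\ZZ^{\NN}\qquad\text{for every }A\subseteq\ZZ^{\NN},
\]
so I would prove this first. Since every conical combination of $\ZZ_{\ge0}$-combinations of $A$ is again a conical combination of $A$, one has $\cncl(\mndcl(A))=\cncl(A)$; applying the recalled fact that $\widehat{M'}=\cncl(M')\cap\ZZ^{\NN}$ to $M'=\mndcl(A)$ then gives the identity. Together with normality of $M$, which yields $M=\cncl(M)\cap\ZZ^{\NN}=C\cap\ZZ^{\NN}$, the main equivalence becomes formal: if $C=\cncl(A)$ then $\widehat{\mndcl}(A)=\cncl(A)\cap\ZZ^{\NN}=C\cap\ZZ^{\NN}=M$; conversely, if $M=\widehat{\mndcl}(A)=\cncl(A)\cap\ZZ^{\NN}$, then $A\subseteq\cncl(A)\cap\ZZ^{\NN}=M$ forces $\cncl(A)\subseteq\cncl(M)=C$, while $M\subseteq\cncl(A)$ forces $C=\cncl(M)\subseteq\cncl(A)$, so $C=\cncl(A)$.

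For (i) and (ii) the point is that one may always replace a finite \emph{real} generating set by a finite \emph{integral} one. If $C=\cncl(A')$ with $A'$ finite, then each $v\in A'$ lies in $C=\cncl(M)$, hence is a finite conical combination of elements of $M\subseteq\ZZ^{\NN}$; collecting all lattice points that occur produces a finite $A\subseteq\ZZ^{\NN}$ with $A'\subseteq\cncl(A)\subseteq\cncl(M)=C=\cncl(A')$, so $C=\cncl(A)$. The same argument works with $\Sym(A')$ in place of $A'$: since the $\Sym$-action is linear and $C=\cncl(\Sym(A'))$ is $\Sym$-invariant, one obtains a finite $A\subseteq\ZZ^{\NN}$ with $C=\cncl(\Sym(A))$. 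Now (i) follows by applying the main equivalence to $A$, and (ii) by applying it to $\Sym(A)$ (again a subset of $\ZZ^{\NN}$); the reverse implications in (i) and (ii) are immediate since $\ZZ^{\NN}\subseteq\RR^{\NN}$.

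For (iii) I would use the two-condition description of $\Sym$-invariance (an increasing chain whose $n$-th term is $\Sym(n)$-invariant). If $\M$ is $\Sym$-invariant, then $M_m\subseteq M_n$ gives $C_m=\cncl(M_m)\subseteq\cncl(M_n)=C_n$, and $\Sym(n)$-invariance of $M_n$ gives that of $\cncl(M_n)=C_n$; conversely, normality yields $M_n=C_n\cap\ZZ^n$, so $C_m\subseteq C_n$ gives $M_m=C_m\cap\ZZ^m\subseteq C_n\cap\ZZ^n=M_n$, and $C_n$ and $\ZZ^n$ being $\Sym(n)$-invariant makes $M_n$ $\Sym(n)$-invariant. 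For the stabilization claim, assume both chains $\Sym$-invariant and fix $n\ge m$. The key observation is that $\cncl(\Sym(n)(M_m))=\cncl(\Sym(n)(C_m))$, which I call $C_{m,n}$: the inclusion ``$\subseteq$'' is clear from $M_m\subseteq C_m$, and ``$\supseteq$'' follows by writing an element of $C_m=\cncl(M_m)$ as a conical combination of elements of $M_m$ and applying a permutation termwise. Then, by the identity above, $\widehat{\mndcl}(\Sym(n)(M_m))=C_{m,n}\cap\ZZ^n$, whereas $\cncl(\Sym(n)(C_m))=C_{m,n}$; since $C_{m,n}\subseteq C_n$ (as $\C$ is $\Sym$-invariant) and $M_n=C_n\cap\ZZ^n$, the equality $C_{m,n}\cap\ZZ^n=M_n$ and the equality $C_{m,n}=C_n$ each imply the other (the first from the second by intersecting with $\ZZ^n$, the second from the first via $C_n=\cncl(M_n)\subseteq\cncl(C_{m,n})=C_{m,n}$). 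Hence $\M$ $\widehat{\mndcl}$-stabilizes precisely when $\C$ stabilizes.

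I do not expect a serious obstacle: once the identity $\widehat{\mndcl}(A)=\cncl(A)\cap\ZZ^{\NN}$ is available, everything reduces to bookkeeping with cones, monoids, and their lattice points. The two spots that need a little care are the passage from a finite real generating set to a finite integral one in (i) and (ii), and the verification in (iii) that $\cncl(\Sym(n)(M_m))$ and $\cncl(\Sym(n)(C_m))$ coincide \emph{before} one intersects with $\ZZ^n$.
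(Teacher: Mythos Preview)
Your proof is correct and follows essentially the same route as the paper: both hinge on the equality $\cncl(\Sym(n)(M_m))=\cncl(\Sym(n)(C_m))$ and on the relationship between $\widehat{\mndcl}$ and $\cncl(\,\cdot\,)\cap\ZZ^{\NN}$, which the paper uses implicitly via the rationality-of-coefficients argument and which you package as the standalone identity $\widehat{\mndcl}(A)=\cncl(A)\cap\ZZ^{\NN}$. One point worth noting: in (i) and (ii) you explicitly pass from a finite \emph{real} generating set of $C$ to a finite \emph{integral} one before invoking the main equivalence, whereas the paper simply declares these items immediate; your extra step is genuinely needed since the main equivalence is stated only for $A\subseteq\ZZ^{\NN}$.
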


\begin{proof}
First assume $M=\widehat{\mndcl}(A)$. Since
 \[
  \cncl(A)\subseteq\cncl(M)= \cncl(\widehat{\mndcl}(A))
  \subseteq \cncl(\cncl(A))=\cncl(A),
 \]
equalities must hold throughout and thus $C=\cncl(M)= \cncl(A)$.

Now assume that $C=\cncl(A)$ for some $A\subseteq\ZZ^{\NN}$. Then, as above, it is easily seen that $C=\cncl(\widehat{\mndcl}(A)).$ So using \eqref{eq_normal} we get 
\[
\widehat{\mndcl}(A)=C\cap \ZZ^{\NN}=M,
\]
since $M$ and $\widehat{\mndcl}(A)$ are both normal monoids. 

 Statements (i) and (ii) follow immediately from what we have just shown. For (iii), it is enough to check that $\cncl(\Sym(n)(C_m))=\cncl(\Sym(n)(M_m))$ for $n\ge m\ge 1.$ This results from
 \[
  \cncl(\Sym(n)(C_m))
  =\cncl(\Sym(n)(\cncl(M_m)))
  \subseteq \cncl(\cncl(\Sym(n)(M_m)))
  =\cncl(\Sym(n)(M_m)),
 \]
where the inclusion is due to \cite[Lemma 5.3]{KLR}.
\end{proof}

We conclude this section with the following normal monoid analogue of Corollary~\ref{c:globallocalcones} that is obtained by combining this result and the previous proposition.
\begin{lemma}
	\label{l:globallocalnormalmonoids}
	Let $M\subseteq\ZZ^\infty_{\ge0}$ be a $\Sym$-invariant normal monoid and let $\overline{\M}=(\overline{M}_{n})_{n\geq 1}$ be its saturated chain of local monoids.  Then the following are equivalent:
	\begin{enumerate}
		\item
		$M$ is $\Sym$-equivariantly $\widehat{\mndcl}$-finitely generated;
		\item
		$\overline{\M}$ $\widehat{\mndcl}$-stabilizes and $M_{n}$ is $\widehat{\mndcl}$-finitely generated for all $n\ge 1$;
		\item
		There exists an $s\in\NN$ such that $\overline{M}_{n}$ is $\widehat{\mndcl}$-finitely generated by elements of support size at most $s$ for all $n\ge1$.
	\end{enumerate}
\end{lemma}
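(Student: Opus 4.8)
The plan is to deduce the lemma from Corollary~\ref{c:globallocalcones} by passing to the cone $C=\cncl(M)$ and applying Proposition~\ref{p:cone-normalmonoid} term by term. First I would set up the dictionary between $M$ and $C$. Since $M\subseteq\ZZ^\infty_{\ge0}$ we get $C\subseteq\RR^\infty_{\ge0}$, and $C$ is $\Sym$-invariant because each $\sigma\in\Sym$ acts linearly and maps $M$ into itself, so $\sigma\bigl(\sum\lambda_i\ab_i\bigr)=\sum\lambda_i\sigma(\ab_i)\in C$. Next I would check that the saturated chain $\overline{\C}=(\overline{C}_n)_{n\ge1}$ of $C$, i.e. $\overline{C}_n=C\cap\RR^n$, satisfies $\overline{C}_n=\cncl(\overline{M}_n)$ for every $n$: the inclusion ``$\supseteq$'' is clear, and for ``$\subseteq$'' one writes $\vb\in C\cap\RR^n$ as a nonnegative combination $\sum\lambda_i\ab_i$ of elements of $M$ and uses nonnegativity to force $\width(\ab_i)\le\width(\vb)\le n$, so that $\ab_i\in M\cap\ZZ^n=\overline{M}_n$. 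Along the way I would also note that each $\overline{M}_n$ is a normal monoid (if $k\ub\in\overline{M}_n$ then $\ub\in\widehat{M}=M$ and $\ub\in\ZZ^n$) and that $\overline{\M}$ is a $\Sym$-invariant chain of monoids, so that Proposition~\ref{p:cone-normalmonoid}(iii) applies to the pair $\overline{\M}$, $\overline{\C}$.

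With this in place, the three conditions translate directly. By Proposition~\ref{p:cone-normalmonoid}(ii), condition~(i) of the lemma is equivalent to $C$ being $\Sym$-equivariantly finitely generated, which is condition~(i) of Corollary~\ref{c:globallocalcones}. By Proposition~\ref{p:cone-normalmonoid}(iii) the $\widehat{\mndcl}$-stabilization of $\overline{\M}$ is equivalent to the stabilization of $\overline{\C}$, and by Proposition~\ref{p:cone-normalmonoid}(i) (applied to $\overline{M}_n$ and $\cncl(\overline{M}_n)=\overline{C}_n$) the monoid $\overline{M}_n$ is $\widehat{\mndcl}$-finitely generated iff $\overline{C}_n$ is finitely generated; together these identify condition~(ii) of the lemma with condition~(ii) of Corollary~\ref{c:globallocalcones}. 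For condition~(iii) I would use the first, sharper statement of Proposition~\ref{p:cone-normalmonoid}: for a finite set $A_n\subset\ZZ^n$ one has $\overline{M}_n=\widehat{\mndcl}(A_n)$ if and only if $\overline{C}_n=\cncl(A_n)$; since this keeps the generating set $A_n$ fixed, the constraint that all its elements have support size at most $s$ is preserved, so condition~(iii) of the lemma is equivalent to condition~(iii) of Corollary~\ref{c:globallocalcones}. The lemma then follows from Corollary~\ref{c:globallocalcones} applied to $C$.

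I do not expect a genuine obstacle here: every step is a routine consequence of the already established Proposition~\ref{p:cone-normalmonoid} and Corollary~\ref{c:globallocalcones}. The one point to carry out with care is the identification $\overline{C}_n=\cncl(\overline{M}_n)$ together with the normality of each $\overline{M}_n$, both of which rely on the nonnegativity hypothesis exactly as in the proof of Corollary~\ref{c:globallocalcones}; and one should make sure that, when translating condition~(iii), the support-size bound survives, which it does precisely because Proposition~\ref{p:cone-normalmonoid} passes between $\widehat{\mndcl}$ and $\cncl$ without changing the generating set.
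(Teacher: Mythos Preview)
Your proposal is correct and follows exactly the approach indicated in the paper, namely combining Corollary~\ref{c:globallocalcones} with Proposition~\ref{p:cone-normalmonoid}. You have in fact supplied more detail than the paper does (in particular the verification that $\overline{C}_n=\cncl(\overline{M}_n)$ and that each $\overline{M}_n$ is normal), but these are precisely the routine checks that make the one-line reduction work.
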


%-----------------------------------------------------------------------------
 \section{Equivariant Gordan's lemma}
 \label{section-Gordan}

The classical lemma of Gordan states that if $C\subseteq\RR^n$ is a finitely generated rational cone, then $M=C\cap\ZZ^n$ is a finitely generated normal monoid (see, e.g., \cite[Lemma 2.9]{BG}). Here, the rationality of $C$ means that $C$ is generated by elements with rational coordinates. The main result of this section is the following equivariant version of Gordan's lemma.

\begin{theorem}
\label{t:Gordan}
 Let $C\subseteq\RR^{\infty}_{\ge0}$ be a $\Sym$-equivariantly finitely generated rational cone. Then $M=C\cap\ZZ^{\infty}$ is a $\Sym$-equivariantly finitely generated normal monoid.
\end{theorem}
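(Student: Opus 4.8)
The plan is to reduce the statement, via the local--global characterizations already established, to a single uniform finiteness assertion about the Hilbert bases of the local monoids, and then to isolate that assertion as the main obstacle. Since $M=C\cap\ZZ^{\infty}$ is automatically a normal monoid contained in $\ZZ^{\infty}_{\ge0}$, Lemma~\ref{l:globallocalmonoids} applies: writing $\overline{\M}=(\overline{M}_n)_{n\ge1}$ with $\overline{M}_n=M\cap\ZZ^n=C\cap\ZZ^n$ for the saturated chain and $\Hc_n$ for the Hilbert basis of $\overline{M}_n$, it suffices to show that each $\overline{M}_n$ is finitely generated and that $\sup_n\|\Hc_n\|<\infty$ (equivalently, by the monotonicity $\|\Hc_m\|\le\|\Hc_n\|$ for $n\ge m$ proved inside that lemma, that the sequence $\|\Hc_n\|$ is eventually constant, i.e.\ condition~(v) there holds).

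Finite generation of the $\overline{M}_n$ is the easy half. Since $C$ is rational we may clear denominators and assume $C=\cncl(\Sym(A))$ with $A\subset\ZZ^{\infty}_{\ge0}$ finite; by Corollary~\ref{c:globallocalcones} (and its proof) the local cone $\overline{C}_n\defas C\cap\RR^n$ equals $\cncl(\Sym(n)(A_n))$ with $A_n=A\cap\RR^n$, hence is a finitely generated \emph{rational} cone. The classical Gordan lemma then shows that $\overline{M}_n=\overline{C}_n\cap\ZZ^n$ is a finitely generated normal monoid, and, $\overline{C}_n$ being rational, $\cncl(\overline{M}_n)=\overline{C}_n$.

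The heart of the proof is the uniform bound $\sup_n\|\Hc_n\|<\infty$. Put $r=\ind(\overline{\C})$, so that $\overline{C}_n=\cncl(\Sym(n)(\Hc_r))$ for all $n\ge r$. I would first record a reduction step: if $\ub\in\overline{M}_n$ admits a conical representation $\ub=\sum_j\lambda_jg_j$ over $\Sym(n)(\Hc_r)$ with some $\lambda_{j_0}\ge1$, then $\ub-g_{j_0}=(\lambda_{j_0}-1)g_{j_0}+\sum_{j\ne j_0}\lambda_jg_j\in\overline{C}_n\cap\ZZ^n=\overline{M}_n$. Choosing, if it exists, a norm-minimal element $\ub$ of $\overline{M}_n\setminus\mndcl(\Sym(n)(\Hc_r))$, this forces every representation of $\ub$ over $\Sym(n)(\Hc_r)$ to have all coefficients $<1$; moreover, grouping the generators in a representation by the connected components of their ``overlapping-support'' graph writes $\ub$ as a sum of elements of $\overline{M}_n$ with pairwise disjoint supports, so minimality also forces this graph to be connected. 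It then suffices to bound $|\supp(\ub)|$ by a constant $B$ depending only on $A$: for then, after applying a suitable permutation, $\ub$ lies in the fixed finitely generated normal monoid $\overline{M}_B$, whence $\|\ub\|\le\|\Hc_B\|<\infty$, independently of $n$; and since $\overline{M}_B\subseteq\overline{M}_n$, every irreducible element of $\overline{M}_n$ of support size $\le B$ is, after a permutation, an irreducible element of $\overline{M}_B$, so that $\Hc_n=\Sym(n)(\Hc_B)$ for all $n\ge B$ --- condition~(iv) of Lemma~\ref{l:globallocalmonoids}. Assembling these steps yields the theorem.

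I expect the hard part to be exactly this last uniform support bound for irreducible lattice points of $C\cap\ZZ^n$. Connectivity of the overlapping-support graph and the all-coefficients-$<1$ property do not by themselves bound $|\supp(\ub)|$ (a crude norm estimate of the form $\|\ub\|<|\supp(\ub)|\cdot\|\Hc_r\|$ is not contradicted by $\|\ub\|\ge|\supp(\ub)|$), so one must bring in the global structure of the chain. The natural input is the equivariant Minkowski--Weyl theorem (Theorem~\ref{t:Minkowski-Weyl}), which presents $\overline{C}_n$ as cut out by the halfspaces $\langle x,\eb_i\rangle\ge0$ together with the $\Sym(n)$-orbits of finitely many linear forms of bounded ``essential support''; combining this bounded facet complexity with integrality --- for instance through a well-quasi-order / pigeonhole argument on supports, showing that a sufficiently ``long'' irreducible element would split off a proper summand that still lies in $C$ --- should force the support sizes of irreducible elements of $\overline{M}_n$ to be bounded uniformly in $n$. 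This is precisely the kind of statement that a general lemma comparing the stabilization of a chain of normal monoids with that of the associated chain of cones is meant to provide, and establishing it (and making the ``splitting off a summand inside $C$'' precise) is, I believe, the technical core of the argument.
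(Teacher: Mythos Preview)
Your reduction is sound: via Lemma~\ref{l:globallocalmonoids} it is indeed enough to show that each $\overline{M}_n$ is finitely generated (which you correctly deduce from the classical Gordan lemma applied to the finitely generated rational cones $\overline{C}_n$) and that the Hilbert bases $\Hc_n$ are uniformly bounded. The problem is that you do not prove this last step---you yourself flag the uniform support bound as ``the technical core'' and leave it open, so the proposal is not a proof but an outline with a gap at exactly the decisive point. There is also a small logical slip: the properties ``all conical coefficients $<1$'' and ``connected overlapping-support graph'' that you derive apply only to the single norm-minimal element of $\overline{M}_n\setminus\mndcl(\Sym(n)(\Hc_r))$, not to an arbitrary element of $\Hc_n$, so even granting a support bound for that particular $\ub$ would not immediately yield $\Hc_n=\Sym(n)(\Hc_B)$.

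The paper closes the gap by a different and more concrete device than the facet/Minkowski--Weyl route you suggest. It introduces a \emph{coordinate-merging} criterion: a $\Sym$-invariant, eventually saturated chain of finitely generated nonnegative cones (respectively normal monoids) stabilizes if and only if, for all $n\gg0$ and every $\ub\in C_n$ (resp.\ $M_n$), some permutation of $\ub$ has $(u_1,\dots,u_{n-2},u_{n-1}+u_n)\in C_{n-1}$ (resp.\ $M_{n-1}$); these are Lemmas~\ref{l:stabilizing-cone} and~\ref{l:stabilizing-monoid}. For cones, the forward direction follows from a Carath\'eodory/double-counting pigeonhole on the supports of the (boundedly supported) generators, locating two coordinates that can be merged. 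Since $\overline{M}_n=\overline{C}_n\cap\ZZ^n$, the merging property passes from $\overline{\C}$ to $\overline{\M}$ for free. The monoid direction then shows $\|\Hc_n\|\le\|\Hc_{n-1}\|$ for $n\gg0$ by proving that merging two coordinates of an irreducible $\ub\in\Hc_n$ yields an irreducible element of $\overline{M}_{n-1}$: if the merged vector split as $\vb'+\wb'$ in $\overline{M}_{n-1}$, one uses the convexity fact that in a $\Sym(n)$-invariant normal monoid any redistribution of the last two coordinates with the same sum stays inside (Corollary~\ref{c:between-monoid}) to lift this to a nontrivial splitting of $\ub$ in $\overline{M}_n$, a contradiction. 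This gives exactly condition~(v) of Lemma~\ref{l:globallocalmonoids}. The missing ingredient in your sketch is precisely this merging-and-lifting mechanism; neither the ``coefficients $<1$'' observation nor the dual description of $\overline{C}_n$ is needed.
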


The main idea to prove this result is to pass to the saturated chains $\overline{\C}$ and $\overline{\M}$ of local cones and monoids, apply Gordan's lemma to these local objects, and then go back to the global cone and monoid by using Corollary~\ref{c:globallocalcones} and Lemma~\ref{l:globallocalmonoids}. In order to be able to apply Corollary~\ref{c:globallocalcones} and Lemma~\ref{l:globallocalmonoids}, we first characterize the stabilization of chains of cones and monoids.

\begin{lemma}
\label{l:stabilizing-cone}
  Let $\C=(C_{n})_{n\geq 1}$ be a $\Sym$-invariant chain of cones with $C_n\subseteq\RR^n_{\ge0}$ being finitely generated for all $n\ge1$. Assume further that $\C$ is eventually saturated. Then the following are equivalent:
  \begin{enumerate}
   \item
   $\C$ stabilizes;
   \item
   When $n\gg 0$ and $\ub=(u_1,\dots,u_n)\in C_n$, there exists $\sigma\in \Sym(n)$ such that
  \[
   (u_{\sigma^{-1}(1)},\dots,u_{\sigma^{-1}(n-2)},u_{\sigma^{-1}(n-1)}+u_{\sigma^{-1}(n)})
   \in C_{n-1}.
  \]
  \end{enumerate}
\end{lemma}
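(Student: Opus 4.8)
The plan is to prove the two implications separately, relying on the definition of stabilization and the fact that, since $\C$ is eventually saturated, for $n \gg 0$ one has $C_n = C \cap \RR^n$ where $C = \bigcup_{m} C_m$ is the limit cone; in particular all the $C_n$ for large $n$ are restrictions of a single global object, so inclusions like $C_{n-1} \subseteq C_n$ and the identity $C_{n-1} = C_n \cap \RR^{n-1}$ hold automatically.

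For the implication (i)$\Rightarrow$(ii): suppose $\C$ stabilizes, say with stability index $r = \ind(\C)$, and fix $n$ large, in particular $n-1 \ge r$ and $n-1$ beyond the saturation threshold. Take $\ub = (u_1,\dots,u_n) \in C_n$. By the stabilization hypothesis, $C_n = \cncl(\Sym(n)(C_{n-1}))$, so $\ub = \sum_j \lambda_j \tau_j(\wb_j)$ with $\lambda_j > 0$, $\tau_j \in \Sym(n)$, and $\wb_j \in C_{n-1} \subseteq \RR^{n-1}$. The key point is that each $\tau_j(\wb_j)$, being in $\RR^{n-1}$ moved by an element of $\Sym(n)$, misses exactly one coordinate index in $[n]$ (or fewer, if $\wb_j$ already had smaller width). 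Since all summands are nonnegative (as $C_n \subseteq \RR^n_{\ge0}$), no cancellation occurs, so $\supp(\ub) \subseteq \bigcup_j \supp(\tau_j(\wb_j))$. I would like to conclude there is a single index $i \in [n]$ missing from all the $\tau_j(\wb_j)$ simultaneously, equivalently $u_i = 0$; but that is generally false when $\ub$ has full support. The correct move is subtler: I only need \emph{one} pair of coordinates of $\ub$ whose merge lands in $C_{n-1}$, so I should instead argue directly that $\ub$ itself, after a suitable relabeling bringing a zero (or a mergeable pair) into the last slot, restricts appropriately. Concretely, I expect the clean route is: pick any $\wb \in C_{n-1}$ and $\tau \in \Sym(n)$ with $\tau(\wb)$ appearing in the representation of $\ub$; let $i$ be the index not in the image support of $\tau$ restricted to $[n-1]$, i.e. $\{\tau(1),\dots,\tau(n-1)\} = [n]\setminus\{i\}$; after applying a permutation moving $i$ to position $n$, I can assume $u_i$ sits last, and then need $(u_1,\dots,u_{n-2},u_{n-1}+u_n)$, after the induced relabeling, to lie in $C_{n-1}$ — this should follow because $C_{n-1} = \cncl(\Sym(n-1)(C_{r}))$ together with the sum-of-coordinates operation being compatible with the $\Sym$-action on the generators, but making this precise is where the real work lies.

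For the implication (ii)$\Rightarrow$(i): assuming the coordinate-merging property for all $n \gg 0$, I would show by downward induction on the "width defect" that every $\ub \in C_n$ lies in $\cncl(\Sym(n)(C_m))$ for a fixed $m$. Given $\ub \in C_n$, apply (ii) to find $\sigma$ so that the merged vector $\ub' = (u_{\sigma^{-1}(1)},\dots,u_{\sigma^{-1}(n-2)}, u_{\sigma^{-1}(n-1)}+u_{\sigma^{-1}(n)}) \in C_{n-1}$. Then $\sigma(\ub)$ and the embedding of $\ub'$ into $\RR^n$ differ by splitting the last coordinate of $\ub'$ into two parts $a\eb_{n-1} + b\eb_n$ with $a+b = u'_{n-1}$, $a,b \ge 0$; since $C_n$ is $\Sym(n)$-invariant and $C_{n-1} \subseteq C_n$, the vector $\eb_{n-1}$-direction splitting can be realized as $\sigma(\ub) = (\text{copy of } \ub' \text{ truncated}) + \text{multiple of } \eb_n$-type corrections, provided $\eb_n$ or suitable rank-one generators are available — this needs the finite generation of $C_n$ and nonnegativity. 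Iterating, one reduces the width down to $m$ while staying inside $\cncl(\Sym(n)(C_m))$, and combined with eventual saturation (so $C_n \cap \RR^m = C_m$) this gives $C_n = \cncl(\Sym(n)(C_m))$ for all large $n$, i.e. $\C$ stabilizes.

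The main obstacle I anticipate is the bookkeeping in (i)$\Rightarrow$(ii): extracting from a conical \emph{combination} of $\Sym(n)$-translates of $(n-1)$-dimensional vectors a \emph{single} pair of coordinate positions of $\ub$ that can be merged, since different summands generically omit different indices. The resolution should exploit that I only need existence of one such $\sigma$ (not a canonical one), plus the nonnegativity of everything in sight and the stronger stabilization identity $C_{n-1} = \cncl(\Sym(n-1)(C_{n-2}))$ to push the merge down one level at a time; essentially one reduces to the case where $\ub$ is a single $\Sym(n)$-translate of a generator, where the statement is transparent.
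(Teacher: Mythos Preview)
Your plan has a genuine gap in (i)$\Rightarrow$(ii), and you have correctly located it: from a conical combination $\ub=\sum_j\lambda_j\tau_j(\wb_j)$ with $\wb_j\in C_{n-1}$, different summands omit different indices, and your proposed fix of ``reducing to the case where $\ub$ is a single $\Sym(n)$-translate of a generator'' does not work, because the merge operation (choosing one pair of coordinates to sum) is not compatible with taking conical combinations term by term. The paper resolves this with a pigeonhole argument that you are missing entirely. First, by Corollary~\ref{c:globallocalcones} the stabilization hypothesis gives a uniform bound $s$ on the support sizes of all generators of every $C_n$. Next, Carath\'eodory bounds the number of generators in a representation of $\ub$ to at most $n$. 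Setting $T_j=\{i:j\in\supp(\vb_i)\}$, double counting gives $\sum_{j=1}^n|T_j|=\sum_i|\supp(\vb_i)|\le ns$, so some index (after permuting, index $n$) satisfies $|T_n|\le s$. The at most $s$ generators in $T_n$ together touch at most $s^2$ indices; hence for $n>s^2$ there is an index (after permuting, $n-1$) avoided by all of them. Now the generators split cleanly: those in $T_n$ avoid $n-1$, and the rest avoid $n$. Applying the transposition $(n{-}1\ n)$ only to the first group makes every summand live in $\RR^{n-1}$, and the resulting sum is exactly $(u_1,\dots,u_{n-2},u_{n-1}+u_n)$, which therefore lies in $C_n\cap\RR^{n-1}=C_{n-1}$ by saturation.

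Your (ii)$\Rightarrow$(i) is closer but also incomplete: you cannot write $\sigma(\ub)$ as $(\ub',0)$ plus a nonnegative $\eb_n$-correction, since the actual difference is $u_{\sigma^{-1}(n)}(\eb_n-\eb_{n-1})$, which has a negative entry, and there is no reason $\eb_n\in C_n$. The paper instead uses a convexity trick (Lemma~\ref{l:between-cone}): the cone $\cncl(\Sym(n)(C_{n-1}))$ is $\Sym(n)$-invariant and contains $(\ub',0)$, hence also its image under the transposition $(n{-}1\ n)$; any convex combination of these two has the form $(u_1,\dots,u_{n-2},v_{n-1},v_n)$ with $v_{n-1}+v_n=u_{n-1}+u_n$ and $v_{n-1},v_n\in[0,u_{n-1}+u_n]$, which in particular recovers $\sigma(\ub)$. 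This single step gives $C_n=\cncl(\Sym(n)(C_{n-1}))$ for all large $n$, so no iteration down to a fixed $m$ is needed.
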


To prove this result, we need a simple but useful property of symmetric cones.

\begin{lemma}
 \label{l:between-cone}
 Let $C\subseteq\RR^n$ be a $\Sym(n)$-invariant cone. If $\ub=(u_1,\dots,u_n)\in C$ and $v_{n-1},v_{n}$ lie between $u_{n-1}$ and $u_n$ such that $v_{n-1}+v_{n}=u_{n-1}+u_n$, then $(u_1,\dots,u_{n-2},v_{n-1},v_n)\in C.$
\end{lemma}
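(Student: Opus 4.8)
We want to show that if $\ub=(u_1,\dots,u_n)\in C$ lies in a $\Sym(n)$-invariant cone and $v_{n-1},v_n$ are both between $u_{n-1}$ and $u_n$ with $v_{n-1}+v_n=u_{n-1}+u_n$, then the vector obtained by replacing the last two coordinates is still in $C$. The key observation is that the transposition $\tau=(n-1\ \ n)\in\Sym(n)$ fixes $C$, so that $\tau(\ub)=(u_1,\dots,u_{n-2},u_n,u_{n-1})\in C$ as well. Since $C$ is a cone, every convex combination $t\ub+(1-t)\tau(\ub)$ with $t\in[0,1]$ lies in $C$; such a combination has the form $(u_1,\dots,u_{n-2},\,tu_{n-1}+(1-t)u_n,\,tu_n+(1-t)u_{n-1})$, and its last two coordinates always sum to $u_{n-1}+u_n$.

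\begin{proof}
Let $\tau\in\Sym(n)$ be the transposition interchanging $n-1$ and $n$. Since $C$ is $\Sym(n)$-invariant, we have
\[
 \tau(\ub)=(u_1,\dots,u_{n-2},u_n,u_{n-1})\in C.
\]
If $u_{n-1}=u_n$, then $v_{n-1}=v_n=u_{n-1}$ by the betweenness and sum conditions, so there is nothing to prove. Assume $u_{n-1}\ne u_n$; without loss of generality $u_{n-1}<u_n$ (otherwise swap the roles of $v_{n-1}$ and $v_n$, which is harmless since we only need membership in $C$ and $C$ is $\tau$-invariant). Then $v_{n-1}\in[u_{n-1},u_n]$, so we may write
\[
 v_{n-1}=t\,u_{n-1}+(1-t)\,u_n
 \quad\text{for some }\ t\in[0,1].
\]
From $v_{n-1}+v_n=u_{n-1}+u_n$ it follows that
\[
 v_n=u_{n-1}+u_n-v_{n-1}=(1-t)\,u_{n-1}+t\,u_n.
\]
Therefore
\[
 (u_1,\dots,u_{n-2},v_{n-1},v_n)
 =t\,\ub+(1-t)\,\tau(\ub).
\]
Since $\ub,\tau(\ub)\in C$, $t\ge0$, $1-t\ge0$, and $C$ is a cone (hence closed under conical, in particular convex, combinations), the right-hand side lies in $C$. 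This proves the claim.
\end{proof}

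The only mildly delicate point is bookkeeping the case $u_{n-1}>u_n$: one should note that swapping the labels of $v_{n-1}$ and $v_n$ reduces it to the case treated above, because the target vector for the swapped labels is the $\tau$-image of the original target, and $\tau(C)=C$. Everything else is immediate from the cone axioms, so I do not anticipate any real obstacle here.
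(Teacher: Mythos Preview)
Your proof is correct and follows essentially the same approach as the paper: use the transposition $\tau=(n-1\ n)$ to get $\tau(\ub)\in C$, then write the target vector as a convex combination $t\ub+(1-t)\tau(\ub)$. The paper's version is a bit more terse (it skips the case distinction $u_{n-1}\lessgtr u_n$ and simply asserts the existence of $\lambda\in[0,1]$ from the betweenness hypothesis), but the argument is the same.
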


\begin{proof}
 Since $C$ is $\Sym(n)$-invariant, $\ub'=(u_1,\dots,u_{n-2},u_{n},u_{n-1})\in C$. By assumption, there exists $\lambda\in[0,1]$ such that
 \[
  v_{n-1}=\lambda u_{n-1}+(1-\lambda)u_n
  \quad\text{and}\quad
  v_{n}=(1-\lambda) u_{n-1}+\lambda u_n.
 \]
Thus, $(u_1,\dots,u_{n-2},v_{n-1},v_n)=\lambda\ub+(1-\lambda)\ub'\in C.$
\end{proof}

This lemma immediately gives the following.

\begin{corollary}
\label{c:between-monoid}
 Let $M\subseteq\ZZ^n$ be a $\Sym(n)$-invariant normal monoid. If $\ub=(u_1,\dots,u_n)\in M$ and $v_{n-1},v_{n}$ are integers lying between $u_{n-1}$ and $u_n$ such that $v_{n-1}+v_{n}=u_{n-1}+u_n$, then $(u_1,\dots,u_{n-2},v_{n-1},v_n)\in M.$
\end{corollary}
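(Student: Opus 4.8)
The plan is to deduce this immediately from Lemma~\ref{l:between-cone} via the dictionary between normal monoids and cones recalled in Section~\ref{sec:monoid}. Put $C=\cncl(M)\subseteq\RR^n$. Since $M$ is normal, one has $M=C\cap\ZZ^n$. First I would check that $C$ is $\Sym(n)$-invariant: each $\sigma\in\Sym(n)$ acts linearly, so $\sigma(C)=\sigma(\cncl(M))=\cncl(\sigma(M))\subseteq\cncl(M)=C$ because $M$ is $\Sym(n)$-invariant; applying this to both $\sigma$ and $\sigma^{-1}$ gives $\sigma(C)=C$.

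Next, since $\ub=(u_1,\dots,u_n)\in M\subseteq C$ and the reals $v_{n-1},v_n$ lie between $u_{n-1}$ and $u_n$ with $v_{n-1}+v_n=u_{n-1}+u_n$, Lemma~\ref{l:between-cone} applied to the $\Sym(n)$-invariant cone $C$ yields $(u_1,\dots,u_{n-2},v_{n-1},v_n)\in C$. Finally, this vector has integer coordinates: the entries $u_1,\dots,u_{n-2}$ because $\ub\in\ZZ^n$, and $v_{n-1},v_n$ by hypothesis. Hence $(u_1,\dots,u_{n-2},v_{n-1},v_n)\in C\cap\ZZ^n=M$, which is what we want.

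There is essentially no obstacle here; the only point deserving a word is the passage from $C$ back to $M$, which uses normality of $M$ in the form $M=\cncl(M)\cap\ZZ^n$, a fact already recorded in Section~\ref{sec:monoid}. If one preferred to stay inside $M$ and not invoke $C$, an alternative is to mimic the proof of Lemma~\ref{l:between-cone} directly: choose $\lambda\in[0,1]$ with $v_{n-1}=\lambda u_{n-1}+(1-\lambda)u_n$ and $v_n=(1-\lambda)u_{n-1}+\lambda u_n$; if $u_{n-1}=u_n$ the claimed vector is just $\ub$, and otherwise $\lambda$ is rational, say $\lambda=p/q$ with $p,q\in\ZZ_{\ge0}$, $q>0$, so that $q\cdot(u_1,\dots,u_{n-2},v_{n-1},v_n)=p\,\ub+(q-p)\,\ub'$ with $\ub'=(u_1,\dots,u_{n-2},u_n,u_{n-1})\in M$ (by $\Sym(n)$-invariance); thus a positive integer multiple of the vector lies in $M$, and normality of $M$ lets us descend. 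Either way the argument is short, and I would present the cone version since it is the cleanest.
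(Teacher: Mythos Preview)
Your proposal is correct and matches the paper's own approach: the paper simply states that the corollary follows immediately from Lemma~\ref{l:between-cone}, and you have spelled out precisely the intended argument---pass to $C=\cncl(M)$, apply the lemma there, and return to $M$ via $M=C\cap\ZZ^n$ using normality (in the sense defined in Section~\ref{sec:monoid}). Your alternative direct argument via rationality of $\lambda$ is also fine, but the cone route is exactly what the paper has in mind.
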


Let us now prove Lemma~\ref{l:stabilizing-cone}.

\begin{proof}[Proof of Lemma~\ref{l:stabilizing-cone}]
Since both statements depend only on $C_n$ with $n\gg0$, it is harmless to assume that $\C$ is saturated.

(i)$\Rightarrow$(ii): Suppose $\C$ stabilizes. Then by Corollary~\ref{c:globallocalcones}, we can find a finite uniform bound $s$ for the support sizes of the generators of $C_m$ for all $m\ge 1$. 

Let $n\ge 1$ and $\ub\in C_n$. By Carath\'{e}odory's theorem there exist generators $\vb_1,\dots,\vb_n$ of $C_n$ and $\lambda_1,\dots,\lambda_n\ge0$ such that $\ub=\sum_{i=1}^n\lambda_i \vb_i.$ For $j\in[n]$ set
\[
T_j=\{i\in[n]\colon j\in\supp(\vb_i)\}.
\]
Then double counting yields
\begin{equation}
\label{e:cardinality}
 \sum_{j=1}^n|T_j|=\sum_{i=1}^n|\supp(\vb_i)|\le ns.
\end{equation}
Replacing $\ub$ with $\sigma(\ub)$ for some $\sigma\in \Sym(n)$, if necessary, we may assume that $T_n$ has the smallest cardinality among the $T_j$. Then from \eqref{e:cardinality} we get $|T_n|\le s.$ For simplicity suppose $T_n=\{1,\dots,k\}$ with $k\le s.$ Note that
$\sum_{i=1}^k|\supp(\vb_i)|\le ks$.
So for $n>s^2\ge ks$, there exists
$l\in [n]\setminus\bigcup_{i=1}^k\supp(\vb_i).$ Again by using the action of some $\sigma'\in \Sym(n)$ we may assume that $l=n-1.$ Then $\supp(\vb_i)\subseteq[n]\setminus\{n-1\}$ for $i=1,\dots,k$ and $\supp(\vb_j)\subseteq[n-1]$ for $i=k+1,\dots,n$. Now let $\pi\in \Sym(n)$ be the transposition $(n-1\ n)$ that swaps $n-1$ and $n$. Then $\supp(\pi(\vb_i))\subseteq[n-1]$ for $i=1,\dots,k$. It follows that
\[
 (u_1,\dots,u_{n-2},u_{n-1}+u_n)
 =\sum_{i=1}^k\lambda_i\pi(\vb_i)+\sum_{j=k+1}^n\lambda_j\vb_j\in C_n\cap\RR^{n-1}= C_{n-1},
\]
where the last equality holds because of the assumption that $\C$ is saturated.

(ii)$\Rightarrow$(i): Let $n\gg 0$ and $\ub=(u_1,\dots,u_n)\in C_n$. Then up to an action of some $\sigma\in \Sym(n)$ we have that
$(u_1,\dots,u_{n-2},u_{n-1}+u_n)\in C_{n-1}.$
This implies
$$
(u_1,\dots,u_{n-2},u_{n-1}+u_n,0)\in\cncl(\Sym(n)(C_{n-1})).
$$
Now applying Lemma~\ref{l:between-cone} to the cone $\cncl(\Sym(n)(C_{n-1}))$, one gets $\ub\in\cncl(\Sym(n)(C_{n-1}))$.
Hence, $C_n=\cncl(\Sym(n)(C_{n-1}))$ for all $n\gg 0$, which means that $\C$ stabilizes.
\end{proof}

Lemma~\ref{l:stabilizing-cone} has the following monoid counterpart.

\begin{lemma}
\label{l:stabilizing-monoid}
  Let $\M=(M_{n})_{n\geq 1}$ be a $\Sym$-invariant chain of monoids with $M_n\subseteq\ZZ^n_{\ge0}$ being normal and finitely generated for all $n\ge1$. Assume further that $\M$ is eventually saturated. Then the following are equivalent:
  \begin{enumerate}
   \item
   $\M$ stabilizes;
   \item
   When $n\gg 0$ and $\ub=(u_1,\dots,u_n)\in M_n$, there exists $\sigma\in \Sym(n)$ such that
  \[
   (u_{\sigma^{-1}(1)},\dots,u_{\sigma^{-1}(n-2)},u_{\sigma^{-1}(n-1)}+u_{\sigma^{-1}(n)})
   \in M_{n-1}.
  \]
  \end{enumerate}
\end{lemma}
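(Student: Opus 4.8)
The plan is to mirror the proof of Lemma~\ref{l:stabilizing-cone} essentially verbatim, replacing the cone operation $\cncl$ with the normal-monoid operation $\widehat{\mndcl}$ and replacing Carath\'{e}odory's theorem and Lemma~\ref{l:between-cone} with their monoidal counterparts. As in that proof, both conditions depend only on $M_n$ for $n\gg0$, so I may assume $\M$ is saturated.

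For the direction (i)$\Rightarrow$(ii): suppose $\M$ stabilizes. By Lemma~\ref{l:globallocalmonoids} (specifically the implication toward condition (iii)), there is a uniform bound $s$ such that $M_m$ is generated by elements of support size at most $s$ for all $m\ge1$ — here I should check that the hypotheses of Lemma~\ref{l:globallocalmonoids} are met, i.e. that the limit monoid $M=\bigcup_n M_n$ is $\Sym$-invariant and $M\subseteq\ZZ^\infty_{\ge0}$, both of which are immediate. Now given $\ub\in M_n$, the subtlety is that unlike in the cone case I cannot invoke Carath\'{e}odory directly: a monoid element is a $\ZZ_{\ge0}$-combination of generators, but the number of generators used is not a priori bounded by $n$. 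However, the argument in Lemma~\ref{l:stabilizing-cone} only needs the support-size bound on individual generators, not a bound on how many appear. So write $\ub=\sum_i m_i\vb_i$ with each $\vb_i$ a generator of support size $\le s$ (repetitions allowed, or group equal terms). Then the double-counting estimate needs to be redone: I want to find a coordinate $n-1$ lying outside $\supp(\vb_i)$ for every generator $\vb_i$ that has $n\in\supp(\vb_i)$. Letting $T_n=\{i : n\in\supp(\vb_i)\}$ (now indexing distinct generators appearing in the sum), I can permute so that coordinate $n$ is hit by as few distinct generators as possible, but the double-counting bound $\sum_j |T_j| = \sum_i |\supp(\vb_i)|$ runs over distinct generators, of which there could be many. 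The clean fix: the \emph{total} number of distinct generators of $M_n$ is finite (say $N_n$), but this isn't uniform. A better route is to note $\bigcup\{\supp(\vb_i) : i\in T_n\}$ has size at most $|T_n|\cdot s$, and choose the permutation so that $|T_n|$ is minimized; since each generator appearing has support size $\le s$, and generators with $n$ in their support form a subset of the (finitely many) generators, minimizing over the $n$ coordinates gives $|T_n|\le \tfrac1n\sum_j|T_j|$. I then need $\sum_j|T_j|$ bounded. The resolution is to use Corollary~\ref{c:between-monoid} more aggressively, or simply: apply Gordan/Carath\'{e}odory at the level of the cone $C_n=\cncl(M_n)$ to write $\ub$ (as an element of $C_n$) as a conical combination of at most $n$ generators of $C_n$, which by Proposition~\ref{p:cone-normalmonoid} and Lemma~\ref{l:globallocalcones}-type reasoning can be taken of support size $\le s$; then the verbatim argument of Lemma~\ref{l:stabilizing-cone} produces $\sigma$ with $(u_{\sigma^{-1}(1)},\dots,u_{\sigma^{-1}(n-1)}+u_{\sigma^{-1}(n)})\in C_n\cap\RR^{n-1}=C_{n-1}$, and since this vector has integer coordinates it lies in $C_{n-1}\cap\ZZ^{n-1}=M_{n-1}$ by normality.

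For the direction (ii)$\Rightarrow$(i): this is the easy half. Given $n\gg0$ and $\ub=(u_1,\dots,u_n)\in M_n$, apply (ii) to get $\sigma\in\Sym(n)$ with $(u_{\sigma^{-1}(1)},\dots,u_{\sigma^{-1}(n-2)},u_{\sigma^{-1}(n-1)}+u_{\sigma^{-1}(n)})\in M_{n-1}$. Embedding into $\ZZ^n$ gives $(u_{\sigma^{-1}(1)},\dots,u_{\sigma^{-1}(n-2)},u_{\sigma^{-1}(n-1)}+u_{\sigma^{-1}(n)},0)\in\mndcl(\Sym(n)(M_{n-1}))$, hence in its normalization $\widehat{\mndcl}(\Sym(n)(M_{n-1}))$, which is a $\Sym(n)$-invariant normal monoid containing $\sigma(\ub)$'s "merged" version. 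Now $u_{\sigma^{-1}(n-1)}$ and $u_{\sigma^{-1}(n)}$ are integers lying between $0$ and their sum (they are nonnegative), summing to $u_{\sigma^{-1}(n-1)}+u_{\sigma^{-1}(n)}$, so Corollary~\ref{c:between-monoid} applied to $\widehat{\mndcl}(\Sym(n)(M_{n-1}))$ gives $\sigma(\ub)\in\widehat{\mndcl}(\Sym(n)(M_{n-1}))$, and applying $\sigma^{-1}$ gives $\ub\in\widehat{\mndcl}(\Sym(n)(M_{n-1}))=\mndcl(\Sym(n)(M_{n-1}))$ — wait, the last equality is exactly what stabilization means, so I should instead conclude $M_n\subseteq\widehat{\mndcl}(\Sym(n)(M_{n-1}))$ and combine with the reverse inclusion \eqref{e:stabilization} to get $M_n=\widehat{\mndcl}(\Sym(n)(M_{n-1}))$ for all $n\gg0$, i.e. $\M$ $\widehat{\mndcl}$-stabilizes; since $M_n$ is moreover finitely generated as a monoid by hypothesis, Corollary~\ref{c:2-stabilization} (or the direct argument behind it) upgrades this to ordinary stabilization of $\M$.

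The main obstacle I anticipate is the bookkeeping in (i)$\Rightarrow$(ii): making precise how one gets a representation of $\ub\in M_n$ using only \emph{boundedly many} generators of bounded support, so that the double-counting step of Lemma~\ref{l:stabilizing-cone} goes through. The cleanest resolution — and the one I would write up — is to not work with monoid generators at all in that step, but to pass to $C_n=\cncl(M_n)$, run the cone argument there (where Carath\'{e}odory legitimately bounds the number of summands by $n$ and the support-size bound $s$ transfers from the cone side via the stabilization hypothesis and Corollary~\ref{c:globallocalcones}), land in $C_{n-1}$, and then drop back to $M_{n-1}=C_{n-1}\cap\ZZ^{n-1}$ using integrality and normality. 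Everything else is a routine translation of the cone proof.
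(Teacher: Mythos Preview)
Your argument for (i)$\Rightarrow$(ii) is correct and is essentially what the paper does: the paper routes this direction through Corollary~\ref{c:stabilizing-normalmonoid}, whose proof (via Proposition~\ref{p:cone-normalmonoid} and Lemma~\ref{l:stabilizing-cone}) is exactly the pass-to-the-cone manoeuvre you settle on at the end. One small point to make explicit in a write-up: the cone chain $\C=(\cncl(M_n))_{n\ge1}$ inherits stabilization from $\M$, and then Lemma~\ref{l:globallocalcones} supplies the eventual saturation of $\C$ needed to invoke Lemma~\ref{l:stabilizing-cone}.

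Your (ii)$\Rightarrow$(i), however, is circular. You correctly establish that $\M$ $\widehat{\mndcl}$-stabilizes (this is Corollary~\ref{c:stabilizing-normalmonoid}, (ii)$\Rightarrow$(i)), but then you invoke Corollary~\ref{c:2-stabilization} to upgrade to ordinary stabilization. In the paper, Corollary~\ref{c:2-stabilization} is proved \emph{using} Lemma~\ref{l:stabilizing-monoid}; indeed the implication ``$\widehat{\mndcl}$-stabilizes $\Rightarrow$ stabilizes'' is precisely the content of Lemma~\ref{l:stabilizing-monoid} (ii)$\Rightarrow$(i) composed with Corollary~\ref{c:stabilizing-normalmonoid}. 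So appealing to Corollary~\ref{c:2-stabilization} (or ``the direct argument behind it'') here is appealing to the very statement you are proving.

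The substantive idea you are missing is a Hilbert-basis argument. The paper shows (ii)$\Rightarrow$(i) by proving $\|\Hc_n\|\le\|\Hc_{n-1}\|$ for $n\gg0$ and invoking Lemma~\ref{l:globallocalmonoids}(v). Given $\ub\in\Hc_n$, condition (ii) produces (after permuting) $\ub'=(u_1,\dots,u_{n-2},u_{n-1}+u_n)\in M_{n-1}$ with $\|\ub'\|=\|\ub\|$; the crux is that $\ub'$ must itself be \emph{irreducible} in $M_{n-1}$. If $\ub'=\vb'+\wb'$ nontrivially, one uses Corollary~\ref{c:between-monoid} to ``unsplit'' the last coordinate and manufacture a nontrivial decomposition $\ub=\vb+\wb$ in $M_n$, contradicting $\ub\in\Hc_n$. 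This irreducibility-preservation step is the genuine work in this direction, and your proposal does not contain it.
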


Before proving this result, let us note the following consequence of Lemma~\ref{l:stabilizing-cone}.

\begin{corollary}
 \label{c:stabilizing-normalmonoid}
 Under the assumptions of Lemma~\ref{l:stabilizing-monoid}, the
 following are equivalent:
  \begin{enumerate}
   \item
   $\M$ $\widehat{\mndcl}$-stabilizes;
   \item
   When $n\gg 0$ and $\ub=(u_1,\dots,u_n)\in M_n$, there exists $\sigma\in \Sym(n)$ such that
  \[
   (u_{\sigma^{-1}(1)},\dots,u_{\sigma^{-1}(n-2)},u_{\sigma^{-1}(n-1)}+u_{\sigma^{-1}(n)})
   \in M_{n-1}.
  \]
  \end{enumerate}
\end{corollary}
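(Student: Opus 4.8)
The plan is to deduce everything from the cone version, Lemma~\ref{l:stabilizing-cone}, applied to the associated chain of cones $\C=(C_n)_{n\ge1}$ with $C_n=\cncl(M_n)$. First I would verify that $\C$ meets the hypotheses of that lemma. Each $C_n$ is a finitely generated cone in $\RR^n_{\ge0}$ (use a finite generating set of $M_n$), and $\C$ is $\Sym$-invariant by Proposition~\ref{p:cone-normalmonoid}(iii). The only slightly delicate point is that $\C$ is eventually saturated: if $M_n=M\cap\ZZ^n$ for all $n\ge n_0$ with $M=\bigcup_k M_k$, then $M_n\cap\ZZ^m=M_m$ for $n\ge m\ge n_0$; given $x\in\bigl(\bigcup_k C_k\bigr)\cap\RR^n$ with $n\ge n_0$, write $x=\sum\lambda_i a_i$ with $\lambda_i>0$ and $a_i\in M_k$ for some $k\ge\max\{n,n_0\}$, and note that nonnegativity forces $\supp(a_i)\subseteq\supp(x)\subseteq[n]$, so $a_i\in M_k\cap\ZZ^n=M_n$ and hence $x\in\cncl(M_n)=C_n$. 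This is exactly where nonnegativity enters, and it is the main technical point of the argument.

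Next, by Proposition~\ref{p:cone-normalmonoid}(iii) the chain $\M$ is $\widehat{\mndcl}$-stabilizing if and only if $\C$ stabilizes, and by Lemma~\ref{l:stabilizing-cone} this holds if and only if, for $n\gg0$, every $\ub\in C_n$ admits a $\sigma\in\Sym(n)$ with $(u_{\sigma^{-1}(1)},\dots,u_{\sigma^{-1}(n-2)},u_{\sigma^{-1}(n-1)}+u_{\sigma^{-1}(n)})\in C_{n-1}$; call this property $(\star)$. It then suffices to prove $(\star)\Leftrightarrow$(ii). The implication $(\star)\Rightarrow$(ii) is immediate: applying $(\star)$ to $\ub\in M_n\subseteq C_n$, the resulting vector has integer entries and lies in $C_{n-1}=\cncl(M_{n-1})$, hence in $\cncl(M_{n-1})\cap\ZZ^{n-1}=M_{n-1}$, the last equality because $M_{n-1}$ is normal.

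For the converse I would not extract $(\star)$ from (ii) directly, but rather prove (ii)$\Rightarrow$($\M$ is $\widehat{\mndcl}$-stabilizing) and close the cycle. Fix $n\gg0$ and $\ub\in M_n$. Applying the permutation supplied by (ii) and using that $N:=\widehat{\mndcl}(\Sym(n)(M_{n-1}))$ is a $\Sym(n)$-invariant normal monoid, we reduce to the case $(u_1,\dots,u_{n-2},u_{n-1}+u_n)\in M_{n-1}$, so that $(u_1,\dots,u_{n-2},u_{n-1}+u_n,0)\in\mndcl(\Sym(n)(M_{n-1}))\subseteq N$. Since $u_{n-1},u_n$ are nonnegative integers lying between $0$ and $u_{n-1}+u_n$ with $u_{n-1}+u_n=(u_{n-1}+u_n)+0$, Corollary~\ref{c:between-monoid} applied to $N$ gives $\ub=(u_1,\dots,u_{n-2},u_{n-1},u_n)\in N$. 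Thus $M_n\subseteq N$, and combined with \eqref{e:stabilization} this yields $M_n=\widehat{\mndcl}(\Sym(n)(M_{n-1}))$ for all $n\gg0$; a routine telescoping argument then promotes this to $M_n=\widehat{\mndcl}(\Sym(n)(M_m))$ for all $n\ge m\gg0$, i.e. $\M$ is $\widehat{\mndcl}$-stabilizing. This closes the loop (i)$\Rightarrow(\star)\Rightarrow$(ii)$\Rightarrow$(i).

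Apart from the transfer of ``eventually saturated'' from $\M$ to $\C$ (the place where nonnegativity is indispensable), the point to watch is not to conflate the cone condition $(\star)$ with the monoid condition (ii): the cleanest way back from (ii) to $\widehat{\mndcl}$-stabilization is the direct argument via Corollary~\ref{c:between-monoid} applied to the normal monoid $\widehat{\mndcl}(\Sym(n)(M_{n-1}))$, mirroring the (ii)$\Rightarrow$(i) step of Lemma~\ref{l:stabilizing-cone}.
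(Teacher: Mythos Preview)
Your proof is correct and follows essentially the same route as the paper's: (i)$\Rightarrow$(ii) is obtained via Proposition~\ref{p:cone-normalmonoid}(iii) and Lemma~\ref{l:stabilizing-cone} applied to $\C=(\cncl(M_n))_{n\ge1}$, and (ii)$\Rightarrow$(i) is the direct argument applying Corollary~\ref{c:between-monoid} to the normal monoid $\widehat{\mndcl}(\Sym(n)(M_{n-1}))$, exactly mirroring the (ii)$\Rightarrow$(i) step of Lemma~\ref{l:stabilizing-cone}. Your write-up is in fact more careful than the paper's terse two-line proof: you make explicit both the eventual saturation of $\C$ (which is needed to invoke Lemma~\ref{l:stabilizing-cone} and which the paper leaves implicit) and the normality step $C_{n-1}\cap\ZZ^{n-1}=M_{n-1}$ that transfers the cone conclusion back to monoids.
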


\begin{proof}
(i)$\Rightarrow$(ii): This follows easily from Proposition~\ref{p:cone-normalmonoid} and Lemma~\ref{l:stabilizing-cone}.

(ii)$\Rightarrow$(i): One argues almost analogously to the implication (ii)$\Rightarrow$(i) of Lemma~\ref{l:stabilizing-cone}. The only difference is that Lemma~\ref{l:between-cone} is now replaced by Corollary~\ref{c:between-monoid}.
\end{proof}

Now we give the proof of Lemma~\ref{l:stabilizing-monoid}.

\begin{proof}[Proof of Lemma~\ref{l:stabilizing-monoid}]
As in the proof of Lemma~\ref{l:stabilizing-cone}, we may assume that $\M$ is saturated.

(i)$\Rightarrow$(ii): This follows from Corollary~\ref{c:stabilizing-normalmonoid} since $\M$ also $\widehat{\mndcl}$-stabilizes if  it stabilizes.

 (ii)$\Rightarrow$(i): Let $\Hc_n$ denote the Hilbert basis of $M_n$. By Lemma~\ref{l:globallocalmonoids}, it is enough to show that $\|\Hc_n\|\le \|\Hc_{n-1}\|$ for $n\gg 0$. Let $n\gg 0$ and $\ub=(u_1,\dots,u_n)\in \Hc_n$. Then up to an action of some $\sigma\in \Sym(n)$ we have that
 \[
  \ub'=(u_1,\dots,u_{n-2},u_{n-1}+u_n)\in M_{n-1}.
 \]
Since $\|\ub\|=\|\ub'\|$, the desired inequality will follow if we can show that $\ub'\in\Hc_{n-1}.$ If this were not the case, there would exist $\vb'=(v_1,\dots,v_{n-1}),\wb'=(w_1,\dots,w_{n-1})\in M_{n-1}\setminus\{\nub\}$ such that $\ub'=\vb'+\wb'.$ This gives, in particular, that $u_{n-1}+u_n=v_{n-1}+w_{n-1}.$ Without loss of generality we may assume $u_{n-1}\le v_{n-1}.$ Since $\vb'=(v_1,\dots,v_{n-1},0)\in M_{n}$, it follows from Corollary~\ref{c:between-monoid} that $\vb=(v_1,\dots,v_{n-2},u_{n-1},v_{n-1}-u_{n-1})\in M_{n}.$ On the other hand, we see that $\wb=(w_1,\dots,w_{n-2},0,w_{n-1})\in M_{n}$ because $\wb'=(w_1,\dots,w_{n-1},0)\in M_{n}.$ Now from $\ub'=\vb'+\wb'$ we get $\ub=\vb+\wb.$ But this contradicts the assumption that $\ub\in \Hc_n$.
\end{proof}

With Lemmas~\ref{l:stabilizing-cone} and \ref{l:stabilizing-monoid} in hand, we are now ready to prove Theorem~\ref{t:Gordan}.

\begin{proof}[Proof of Theorem~\ref{t:Gordan}]
 Consider the saturated chains $\overline{\C}=(\overline{C}_{n})_{n\geq 1}$ and $\overline{\M}=(\overline{M}_{n})_{n\geq 1}$ of $C$ and $M$, respectively.
By Corollary~\ref{c:globallocalcones}, $\overline{\C}$ stabilizes and $\overline{C}_n$ is a finitely generated rational cone for all $n\ge1$. So by Gordan's lemma, each $\overline{M}_n$ is a finitely generated normal monoid. Now combining Lemmas~\ref{l:stabilizing-cone} and \ref{l:stabilizing-monoid} we deduce that $\overline{\M}$ also stabilizes. Therefore, $M$ is a $\Sym$-equivariantly finitely generated normal monoid by Lemma~\ref{l:globallocalmonoids}.
\end{proof}

Gordan's lemma is known to be false without the rationality assumption of the cone; see, e.g., \cite[Exercise 2.6]{BG}. The next example shows a similar situation in the equivariant setting.

\begin{example}
 Let $\ub=(1,a)\in\RR^2$ with $a>1$ an irrational number. Consider the cone
 \[
  C=\cncl(\Sym(\ub))\subseteq\RR^{\infty}_{\ge0}.
 \]
We show that the monoid $M=C\cap\ZZ^{\infty}$ is not $\Sym$-equivariantly finitely generated. Indeed, let $\overline{\C}=(\overline{C}_{n})_{n\geq 1}$ and $\overline{\M}=(\overline{M}_{n})_{n\geq 1}$ be saturated chains of $C$ and $M$. By Lemma~\ref{l:globallocalmonoids}, it is enough to prove that $\overline{M}_n$ is not finitely generated for all $n\ge 2.$ Assume on the contrary that there exists $n\ge 2$ such that $\overline{M}_n$ is finitely generated, say, by $\vb_i=(v_{i1},\dots,v_{in})$ for $i=1,\dots,m$. Observe that $\overline{\C}$ is exactly the chain examined in Example~\ref{e:1-generator}. Since $\vb_i\in \overline{C}_n$, it follows from Example~\ref{e:1-generator} that
\[
 a(v_{i1}+\dots+v_{i,n-1})-v_{in}>0
 \quad\text{for }\ i=1,\dots,m,
\]
where the inequalities are strict because $a\not\in\QQ$ and $v_{ij}\in\ZZ.$ Set
\[
 \delta=\min\{a(v_{i1}+\dots+v_{i,n-1})-v_{in}\colon 1\le i\le m\}>0.
\]
Obviously, one can choose $\ub=(u_1,\dots,u_n)\in\ZZ^n_{\ge0}$ with $u_1\le\cdots\le u_n$ such that
\begin{equation}
\label{eq:delta}
 0<a(u_1+\cdots+u_{n-1})-u_n<\delta.
\end{equation}
By Example~\ref{e:1-generator}, this implies $\ub\in \overline{C}_n$. Hence $\ub\in \overline{M}_n.$ Now using \eqref{eq:delta} one checks easily that $\ub$ does not belong to the monoid generated by $\vb_1,\dots,\vb_m$, a contradiction.
\end{example}

Finally, let us derive some easy consequences of Theorem~\ref{t:Gordan} and its proof. First of all, we have the following local version of Theorem~\ref{t:Gordan}, which is essentially an immediate step in the proof of this theorem.

\begin{corollary}
 Let $\C=(C_n)_{n\geq 1}$ be a stabilizing $\Sym$-invariant chain of finitely generated rational cones with $C_n\subseteq\RR^n_{\ge0}$ for $n\ge1$. Then
 $\M=(M_{n})_{n\geq 1}$ with $M_n=C_n\cap\ZZ^n$ for $n\ge1$ is a stabilizing $\Sym$-invariant chain of finitely generated normal monoids.
\end{corollary}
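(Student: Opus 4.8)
The plan is to establish the three asserted properties of $\M$ separately, each time reducing to a result already available in the excerpt.

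That $\M$ is a $\Sym$-invariant chain is immediate: for $n\ge m$ one has $C_m\subseteq C_n$ and $C_n$ is $\Sym(n)$-invariant, so $\sigma(C_m)\subseteq C_n$ for every $\sigma\in\Sym(n)$; since $\sigma$ merely permutes coordinates it also preserves $\ZZ^n$, whence $\Sym(n)(M_m)\subseteq C_n\cap\ZZ^n=M_n$, and $\mndcl(\Sym(n)(M_m))\subseteq M_n$ because $M_n$ is a monoid. That each $M_n$ is a finitely generated normal monoid is the classical Gordan lemma: $M_n=C_n\cap\ZZ^n$ is automatically normal as the lattice points of a cone, and finite generation follows since $C_n$ is a finitely generated rational cone in $\RR^n$.

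The substance is the stabilization of $\M$, which I would derive from Lemmas~\ref{l:stabilizing-cone} and \ref{l:stabilizing-monoid}. First note that since $\C$ stabilizes and every $C_n$ is finitely generated, Lemma~\ref{l:globallocalcones} (applied to the limit cone $C=\bigcup_n C_n$) produces an $s$ with $C\cap\RR^n=C_n$ for $n\ge s$; thus $\C$ is eventually saturated, and intersecting with $\ZZ^n$ shows that $\M$ is eventually saturated as well. Now Lemma~\ref{l:stabilizing-cone} applies to $\C$ and gives, for $n\gg0$ and any $\ub=(u_1,\dots,u_n)\in C_n$, a permutation $\sigma\in\Sym(n)$ with $(u_{\sigma^{-1}(1)},\dots,u_{\sigma^{-1}(n-2)},u_{\sigma^{-1}(n-1)}+u_{\sigma^{-1}(n)})\in C_{n-1}$. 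Specializing to $\ub\in M_n\subseteq C_n$, this vector has integer entries, hence lies in $C_{n-1}\cap\ZZ^{n-1}=M_{n-1}$; this is precisely condition~(ii) of Lemma~\ref{l:stabilizing-monoid}. Since $\M$ is an eventually saturated $\Sym$-invariant chain of finitely generated normal monoids, Lemma~\ref{l:stabilizing-monoid} concludes that $\M$ stabilizes.

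I do not anticipate a genuine obstacle; the argument is essentially the core of the proof of Theorem~\ref{t:Gordan}, stated in purely local terms. The only place warranting a bit of care is the verification that $\C$ and $\M$ are eventually saturated, which is needed to invoke Lemmas~\ref{l:stabilizing-cone} and \ref{l:stabilizing-monoid}; this follows from Lemma~\ref{l:globallocalcones}, or alternatively from the width/Carath\'{e}odory argument used in the proof of Corollary~\ref{c:globallocalcones}, which in fact shows that a stabilizing chain of nonnegative cones is saturated from its stability index onward (and likewise for the associated monoid chain).
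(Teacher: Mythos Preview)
Your argument is correct, but it takes a different route from the paper's. The paper passes to the global objects: setting $C=\bigcup_n C_n$ and $M=\bigcup_n M_n=C\cap\ZZ^\infty$, it invokes Lemma~\ref{l:globallocalcones} to see that $C$ is $\Sym$-equivariantly finitely generated, applies Theorem~\ref{t:Gordan} to conclude that $M$ is $\Sym$-equivariantly finitely generated, and then returns to the local chain via \cite[Corollary~5.13]{KLR} to obtain stabilization and finite generation of the $M_n$. Your proof, by contrast, stays entirely local: after checking eventual saturation of $\C$ and $\M$ via Lemma~\ref{l:globallocalcones}, you transfer condition~(ii) of Lemma~\ref{l:stabilizing-cone} from $\C$ to $\M$ by integrality and conclude with Lemma~\ref{l:stabilizing-monoid}. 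This is precisely the mechanism inside the proof of Theorem~\ref{t:Gordan} (as you note), so you are effectively unpacking that proof rather than citing the theorem. The paper's approach is shorter and exhibits the corollary as a formal consequence of the main theorem; your approach is more self-contained and avoids the round trip through the global monoid, at the cost of repeating the core stabilization argument.
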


\begin{proof}
 First, Gordan's lemma implies that each normal monoid $M_n$ is finitely generated. Let $C=\bigcup_{n\geq 1} C_n$ and $M=\bigcup_{n\geq 1} M_n$. Then $M=C\cap\ZZ^\infty.$ Applying Lemma~\ref{l:globallocalcones} we know that $C$ is a $\Sym$-equivariantly finitely generated rational cone. 
 Hence, $M$ a is $\Sym$-equivariantly finitely generated monoid by virtue of Theorem~\ref{t:Gordan}. The desired conclusion thus follows from \cite[Corollary 5.13]{KLR}.
\end{proof}

Obviously, a stabilizing $\Sym$-invariant chain of normal monoids also $\widehat{\mndcl}$-stabilizes. The next corollary gives the condition under which the converse of this also holds.

\begin{corollary}
 \label{c:2-stabilization}
 Let $\M=(M_{n})_{n\geq 1}$ be a $\Sym$-invariant chain of finitely generated normal monoids with $M_n\subseteq\ZZ^n_{\ge0}$ for $n\ge1$. Then the following are equivalent:
 \begin{enumerate}
   \item
   $\M$ stabilizes;
   \item
   $\M$ $\widehat{\mndcl}$-stabilizes.
 \end{enumerate}
\end{corollary}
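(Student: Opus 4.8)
The plan is to handle the two implications separately: (i)$\Rightarrow$(ii) is essentially free, and (ii)$\Rightarrow$(i) is the real content, which I would reduce to the characterizations already established in Lemma~\ref{l:stabilizing-monoid} and Corollary~\ref{c:stabilizing-normalmonoid}.

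For (i)$\Rightarrow$(ii): if $\M$ stabilizes, say $M_n=\mndcl(\Sym(n)(M_m))$ for all $n\ge m\ge r$, then inserting this into $\mndcl(\Sym(n)(M_m))\subseteq\widehat{\mndcl}(\Sym(n)(M_m))\subseteq M_n$, whose second inclusion is \eqref{e:stabilization}, forces equalities throughout, so $\M$ $\widehat{\mndcl}$-stabilizes. (This was already remarked just above the statement.)

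For (ii)$\Rightarrow$(i), assume $\M$ $\widehat{\mndcl}$-stabilizes. The decisive intermediate step is to show that $\M$ is \emph{eventually saturated}, which is exactly the hypothesis missing in order to invoke Lemmas~\ref{l:stabilizing-cone}--\ref{l:stabilizing-monoid}. Set $C_n=\cncl(M_n)$: this is a finitely generated rational cone with $C_n\subseteq\RR^n_{\ge0}$, and by Proposition~\ref{p:cone-normalmonoid}(iii) the chain $\C=(C_n)_{n\ge1}$ is $\Sym$-invariant and stabilizes. Applying Lemma~\ref{l:globallocalcones} (implication (ii)$\Rightarrow$(iii)) to $\C$, we obtain an $s\in\NN$ with $C\cap\RR^n=C_n$ for all $n\ge s$, where $C=\bigcup_{n\ge1}C_n$. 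Since each $M_n$ is normal we have $M_n=C_n\cap\ZZ^n$, so for $n\ge s$ this gives $M_n=(C\cap\RR^n)\cap\ZZ^n=C\cap\ZZ^n=M\cap\ZZ^n$ with $M=\bigcup_{n\ge1}M_n$; that is precisely the statement that $\M$ is eventually saturated.

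Now $\M$ meets all the hypotheses of Lemma~\ref{l:stabilizing-monoid}. Since $\M$ $\widehat{\mndcl}$-stabilizes, Corollary~\ref{c:stabilizing-normalmonoid} yields the coordinate-merging condition (ii) of that corollary, which is verbatim condition (ii) of Lemma~\ref{l:stabilizing-monoid}; hence that lemma gives that $\M$ stabilizes, and we are done. I do not anticipate a genuine obstacle: every step quotes a result from earlier in the paper, and the only place requiring some care is the verification that $\M$ is eventually saturated, where the assumption $M_n\subseteq\ZZ^n_{\ge0}$ is essential (it is what makes Lemma~\ref{l:globallocalcones} applicable). As an alternative to the last two paragraphs, one could instead observe that $C$ is a $\Sym$-equivariantly finitely generated rational cone in $\RR^\infty_{\ge0}$ with $M=C\cap\ZZ^\infty$, deduce from Theorem~\ref{t:Gordan} that $M$ is $\Sym$-equivariantly finitely generated, and conclude via \cite[Corollary 5.13]{KLR} that $\M$ stabilizes; the route through Lemma~\ref{l:stabilizing-monoid} is preferable only in that it stays entirely within the local framework of this section.
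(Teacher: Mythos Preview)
Your proof is correct and follows essentially the same route as the paper: the paper also only addresses (ii)$\Rightarrow$(i), establishes eventual saturation of $\M$ by passing to $\C=(C_n)_{n\ge1}$ with $C_n=\cncl(M_n)$, invoking Proposition~\ref{p:cone-normalmonoid}(iii) and Lemma~\ref{l:globallocalcones} to get eventual saturation of $\C$, and then uses $M_n=C_n\cap\ZZ^n$ to transfer this to $\M$. Your explicit justification of the step $M_n=M\cap\ZZ^n$ and the alternative route via Theorem~\ref{t:Gordan} are nice additions, but the approach is the same.
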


\begin{proof}
 We prove (ii)$\Rightarrow$(i). In order to apply Lemma~\ref{l:stabilizing-monoid} and Corollary~\ref{c:stabilizing-normalmonoid}, it suffices to show that $\M$ is eventually saturated. Let $\C=(C_n)_{n\geq 1}$ with $C_n=\cncl(M_n)$ for $n\ge1.$ Then by Proposition~\ref{p:cone-normalmonoid}, $\C$ is a stabilizing $\Sym$-invariant chain of finitely generated cones. So from Lemma~\ref{l:globallocalcones} it follows that $\C$ is eventually saturated. Since $M_n=C_n\cap\ZZ^n$ for $n\ge1$, the chain $\M$ is also eventually saturated.
\end{proof}

Combining Proposition~\ref{p:cone-normalmonoid} and Theorem~\ref{t:Gordan} we immediately obtain the following result, which partially extends \cite[Corollary 2.10]{BG} to the equivariant setting.

\begin{corollary}
\label{cor_normal_monoid}
 Let $M\subseteq\ZZ^\infty_{\ge0}$ be a $\Sym$-invariant normal monoid and let $C=\cncl(M)$. Then the following are equivalent:
 \begin{enumerate}
  \item
  $M$ is $\Sym$-equivariantly finitely generated;
  \item
  $M$ is $\Sym$-equivariantly $\widehat{\mndcl}$-finitely generated;
  \item
  $C$ is $\Sym$-equivariantly finitely generated.
 \end{enumerate}
\end{corollary}

In view of \cite[Corollary 2.10]{BG} one might ask whether the normality assumption on the monoid $M$ can be omitted in the previous result. As the next example shows, this assumption is essential in the equivariant setting. 

\begin{example}
\label{ex:infinite-monoid}
Let $\ub_1=2\eb_1$ and $\ub_k=2\eb_1+\sum_{i=2}^k\eb_i$ for $k\ge 2$. Consider the $\Sym$-invariant monoid $M=\mndcl(\Sym(A))$ with $A=\{\ub_k:k\ge1\}.$
 Let $C=\cncl(M)$. Since $\eb_1\in C$, it is clear that $C=\RR^\infty_{\ge0}=\cncl(\Sym(\eb_1))$. Thus, $C$ is $\Sym$-equivariantly finitely generated. However, $M$ is not $\Sym$-equivariantly finitely generated.
 To see this, let $\overline{\M}=(\overline{M}_{n})_{n\geq 1}$ be the saturated chain of $M$. Then
 $\overline{M}_{n}=\mndcl(\Sym(n)(A_n))$ with $A_n=\{\ub_k:1\le k\le n\}$. 
 We show that $\Hc_n=\Sym(n)(A_n)$ is the Hilbert basis of $\overline{M}_{n}$ for all $n\ge1$. Indeed, it suffices to check that $\ub_k$ is an irreducible element of $\overline{M}_{n}$ for $k=1,\dots,n$. If this were not the case, there would exist $\vb,\wb\in\overline{M}_{n}\setminus\{\nub\}$ such that $\ub_k=\vb+\wb.$ Since $\overline{M}_{n}$ is generated by $\Hc_n$, both $\vb$ and $\wb$ have a coordinate at least 2. Hence, $\vb+\wb$ has either one coordinate at least 4, or two coordinates at least 2. So the equality $\ub_k=\vb+\wb$ cannot occur, confirming that $\Hc_n$ is the Hilbert basis of $\overline{M}_{n}$. Since $\|\Hc_n\|=n+1$ for $n\ge1$, it follows from Lemma~\ref{l:globallocalmonoids} that $M$ is not $\Sym$-equivariantly finitely generated.
\end{example}

%---------------------------------------------------------------
\section{Concluding remarks}
\label{sec-Discussion}

In this section let us briefly discuss the connection of our results to algebraic statistics. See \cite[Chapter 9]{Su} for any unexplained terminology. 

Let $\Delta$ be a simplicial complex on $[m]$ and $\rb=(r_1,\dots,r_m)\in\NN^m.$ Let $A_{\Delta,\rb}$ denote the design matrix of the hierarchical model associated with $\Delta$ and $\rb$. An important result in algebraic statistics is the finiteness theorem of Ho\c{s}ten and Sullivant \cite{HoS}, saying that the Markov bases for the lattice $\ker_\ZZ A_{\Delta,\rb}$ eventually stabilize up to symmetry if one of the entries of $\rb$ is allowed to tend to $\infty$, while the other entries are fixed. This generalizes previous finiteness results of Aoki and Takemura \cite{AT} for the no three-way interaction model and of Santos and Sturmfels \cite{SSt} for logit models. Ho\c{s}ten and Sullivant also proposed a far reaching generalization of their result: they conjectured that the finiteness up to symmetry of Markov bases for $\ker_\ZZ A_{\Delta,\rb}$ continues to hold if the entries of $\rb$ with indices in an independent set $T$ are allowed to tend to $\infty$, while the other ones are fixed. Here a subset $T\subseteq [m]$ is called \emph{independent} if it has at most one element in common with any face of $\Delta$. This conjecture, known as the independent set conjecture, was later proven by Hillar and Sullivant \cite{HS12} via an algebraic approach, employing their celebrated result on the equivariant Noetherianity of the polynomial ring $K[X_{[c]\times\NN}]$.

In our terms, the finiteness results of Ho\c{s}ten-Sullivant \cite{HoS} and Hillar-Sullivant \cite{HS12} are \emph{local}, in the sense that they concern chains of lattices of finite rank. It is of interest to know whether there are \emph{global} versions of these results. Addressing this question requires developing a theory of Markov bases for lattices in $\ZZ^\infty$, and more generally, in the free $\ZZ$-module $\ZZ[\NN^k]$ with basis $\NN^k$ for some $k\ge 1.$ In the upcoming paper \cite{LR22}, we study Markov bases and Graver bases for lattices in $\ZZ[\NN^k]$, relying on the framework of \cite{KLR} and the present paper. For any $\Sym$-invariant lattice $L\subseteq\ZZ[\NN^k]$, we relate the finiteness of equivariant Graver bases for $L$ to the finiteness of equivariant Hilbert bases for a certain nonnegative monoid.
When $k=1$, it follows from a result analogous to Theorem~\ref{t:Gordan} that such a monoid always has a finite equivariant Hilbert basis. Thus every $\Sym$-invariant lattice in $\ZZ^\infty$ has a finite equivariant Graver basis. Since the Graver basis contains a Markov basis, this yields (an improvement of) the global version of the finiteness result of Ho\c{s}ten and Sullivant \cite{HoS}. 
It is still open whether the global version of the independent set theorem of Hillar and Sullivant \cite{HS12} can be proven within our framework. We propose in \cite{LR22} a conjecture that implies this result.

\bigskip
{\bf Acknowledgement.} We would like to thank Thomas Kahle for inspiring discussions related to this paper. We are also grateful to the anonymous referees for their insightful and constructive suggestions, especially for pointing out a mistake in the original statement of Corollary~\ref{cor_normal_monoid} that led to Example~\ref{ex:infinite-monoid}.

\end{document}